\font\myfont=cmr12 at 20pt
\title[Hardy's uncertainty principle for the Wigner and Schr\"{o}dinger evolutions]{\myfont{\textbf{Notes on Hardy's Uncertainty Principle for the Wigner distribution and Schr\"{o}dinger evolutions}}}
\author{Helge Knutsen}
\email{helge.knutsen@ntnu.no}
\date{November 2022}
\address{Department of Mathematical Sciences, Norwegian University of Science and \mbox{Technology,} \newline 7034 Trondheim, Norway}
\keywords{Hardy's Uncertainty Principle, Schr\"{o}dinger equation, Symplectic geometry}
\subjclass[2010]{42B10, 35B05}
\theoremstyle{plain} 
\newtheorem{theorem}{Theorem}[section]
\newtheorem{corollary}{Corollary}[section]
\newtheorem{lemma}[theorem]{Lemma}
\theoremstyle{definition} 
\newtheorem*{remark}{Remark}
\newtheorem{example}{Example}[section]
\numberwithin{equation}{section} 
\DeclareMathAlphabet{\pazocal}{OMS}{zplm}{m}{n}
\begin{document}

\maketitle


\begin{abstract}
    We consider Schr\"{o}dinger equations with real quadratic Hamiltonians, for which the Wigner distribution of the solution at a given time equals, up to a linear coordinate transformation, the Wigner distribution of the initial condition. Based on Hardy's uncertainty principle for the joint time-frequency representation, we prove a uniqueness result for such Schr\"{o}dinger equations, where the solution cannot have strong decay at two distinct times. This approach reproduces known, sharp results for the free Schr\"{o}dinger equation and the harmonic oscillator, and we also present an explicit scheme for quadratic systems based on positive definite matrices.
\end{abstract}

\section{Introduction}
Hardy's uncertainty principle is originally formulated as a sharp decay estimate of a function $f$ and its Fourier transform $\hat{f}$. Normalizing the Fourier transform by
\begin{align}
    \hat{f}(\xi) :=(2\pi)^{-n}\int_{\mathbb{R}^n}f(x)e^{-i\langle \xi,x\rangle}\mathrm{d}x,
    \label{normalization_Fourier_transform}
\end{align}
it states that the decay of $(f,\hat{f})$ cannot exceed 
\begin{align*}
    |f(x)| = \pazocal{O}(e^{-\alpha|x|^2}) \ \ \text{and} \ \ |\hat{f}(\xi)| = \pazocal{O}(e^{-\beta|\xi|^2}) \ \ \text{for} \ \ 4 \alpha\beta=1.
\end{align*}
Later, at the beginning of this century in \cite{Escauriaza_Kenig_Ponce_Vega_2006} and \cite{Chanillo_2007}, Hardy's uncertainty principle has taken a different interpretation, where the statement has been shown to be equivalent to a sharp uniqueness result on the free Schr\"{o}dinger equation
\begin{align}
    \frac{\partial u}{\partial t}(x,t) = i\Delta u(x,t).
    \label{simple_free_Schrodinger_equation}
\end{align}
More precisely, at two distinct times $t=0$ and $t=T$, the solution $u$ of \eqref{simple_free_Schrodinger_equation} cannot exceed the decay conditions 
\begin{align*}
    |u(x,0)| = \pazocal{O}(e^{-\alpha|x|^2}) \ \ \text{and} \ \ |u(x,T)| = \pazocal{O}(e^{-\beta|x|^2}) \ \ \text{for} \ \ (4T)^2 \alpha\beta =1.
\end{align*}
For an in-depth discussion on this dynamical interpretation and different proofs of Hardy's uncertainty principle, we refer to the recent survey-paper \cite{Bertolin_Malinnikova_2021}. Starting with \cite{Escauriaza_Kenig_Ponce_Vega_2006}, and in the sequel of papers \cite{Escauriaza_Kenig_Ponce_Vega_2008}, \cite{Escauriaza_Kenig_Ponce_Vega_2008_JEMS}, \cite{Escauriaza_Kenig_Ponce_Vega_2010}, Escauriaza, Kenig, Ponce and Vega have studied Schr\"{o}dinger evolutions for more general Hamiltonians, which include a bounded potential. Their scheme is based on establishing logarithmic-convexity properties of the solution of the Schr\"{o}dinger equation, from which they successfully derive uniqueness results similar to the free case. In the same spirit, in \cite{Cassano_Fanelli_2015} and more recently in \cite{Cassano_Fanelli_2017}, Cassano and Fanelli consider Schr\"{o}dinger evolutions of the harmonic oscillator and of systems with a magnetic potential, in addition to some bounded perturbations of these. In particular, the magnetic potential is given by $\Delta_{A}:=\big(\nabla-i A(x)\big)^2$ for some coordinate transformation $A:\mathbb{R}^n\to\mathbb{R}^n$, so that the associated Schr\"{o}dinger equation (with a bounded perturbation $V:\mathbb{R}^n\times \mathbb{R}\to \mathbb{C}$) reads
\begin{align*}
    \frac{\partial u}{\partial t}(x,t) = i\big(\Delta_{A}+V(x,t)\big)u(x,t).
\end{align*}
With the exception of some specific examples, and under some restraints on the coordinate transform $A$, the uniqueness results for the magnetic potential are all derived for dimension $n\geq 3$. Note that, if we assume $A$ to be a linear transformation (and disregard any perturbations), both the harmonic oscillator and the magnetic potential represent systems with a \textit{quadratic} Hamiltonian, which is the focus of the present paper. 

For real quadratic Hamiltonians (without any perturbations) we present an \textit{alternative} approach based on Hardy's uncertainty principle for the Wigner distribution. Here we utilize that for such Schr\"{o}dinger equations, the Wigner distribution of the solution equals, up to a linear coordinate transform, the Wigner distribution of the initial condition. This description of the solution in terms of the Wigner distribution first came to our attention through the works of Cordero and Rodino, and later Cordero, Giacchi and Rodino, in their two-part series \cite{Cordero_Rodino_2022_prtI}, \cite{Cordero_Giacchi_Rodino_2022_prtII} on the Wigner analysis of operators. From here it turns out to be remarkably simple to formulate a general uniqueness results for quadratic systems. Nonetheless, considering specified quadratic Hamiltonians, it can still be demanding to produce explicit Hardy type estimates. 

The remainder of the text is organized as follows: In Section \ref{section_preliminaries}, we cover necessary background theory. Namely, we introduce the Weyl quantization procedure (Section \ref{section_Weyl_quantization}), and introduce relevant theory from symplectic mechanics (Section \ref{section_symplectic_mechanics}), which seems to be a natural perspective when considering real quadratic quantum systems. We also briefly discuss Hardy's uncertainty principle in relation to the Wigner distribution (Section \ref{section_Hardys_UP_Wigner_distribution}). 
Section \ref{section_Hardys_UP_for_quadratic_Hamiltonians} contains the general uniqueness result, where the subsequent and final Section \ref{section_examples_of_Schrodinger_evolutions} is devoted to specific examples of Schr\"{o}dinger evolutions. In Section \ref{section_examples_of_Schrodinger_evolutions}, we reproduce known sharp Hardy type estimates for both the free case and the harmonic oscillator. In addition, based on Williamson's diagonalization theorem, we present an explicit scheme for systems based on real, symmetric positive definite matrices, with one final example for dimension $n=2$.  

\section{Preliminaries}
\label{section_preliminaries}
\subsection{Weyl quantization and the Schr\"{o}dinger equation} 
\label{section_Weyl_quantization}
For the position and momentum observables $x, p \in\mathbb{R}^{n}$, we associate the following pseudodifferential operator
\begin{align*}
    x_j \to \widehat{x}_j=x_j \ \ \text{and} \ \ p_j\to \widehat{p}_j = -i\hbar \frac{\partial}{\partial x_j}.
\end{align*}
For their composition $x_j p_k$, we consider standard Weyl quantization 
\begin{align*}
    x_j p_k \to \frac{1}{2}\big(\widehat{x}_j\widehat{p}_k+\widehat{p}_k \widehat{x}_j\big). 
\end{align*}
Although we only require the quantization procedures outlined above, we mention that in the general case the Weyl pseudodifferential operator $\widehat{H}$ with symbol function $H = H(x,p)$ is given by
\begin{align}
    \widehat{H}f(x)= (2\pi \hbar)^{-n}\int_{\mathbb{R}^{2n}} H\left(\frac{x+y}{2},\xi\right)e^{\frac{i}{\hbar}\langle x-y, \xi\rangle}f(y)\mathrm{d}y\mathrm{d}\xi.
    \label{Weyl_quantization_procedure}
\end{align}
In the context of the quantum mechanics, we consider symbol functions that represent the Hamiltonian of a physical system. The Schr\"{o}dinger equation with Hamiltonian $H$ is then given by
\begin{align}
    i \hbar \frac{\partial u}{\partial t}(x,t) = \widehat{H}u(x,t),
    \label{Schrodinger_equation}
\end{align}
where the solution $u$ is a function (or distribution) of the variables $x$ and $t$, subject to some initial condition, e.g., $u(x,0) = u_{0}(x)$. We shall focus on the case when the Hamiltonian is \textit{quadratic}, namely, $H(x,p) = \sum_{j,k}a_{jk}x_j p_k$ for real $a_{jk}$'s. Alternatively, this can be expressed as 
\begin{align}
    H(z) = \frac{1}{2}\langle M z, z\rangle \ \ \text{for} \ \ z=(x,p),
    \label{quadratic_hamiltonian_symmetric_matrix}
\end{align}
and $M$ is some real-valued $2n\times 2n$ symmetric matrix. On this form, \textit{symplectic mechanics} naturally enters the picture, and we reference \cite{deGosson_Symplectic_Methods} for a comprehensive introduction.

\subsection{Symplectic mechanics} 
\label{section_symplectic_mechanics}
We briefly cover some of the basic terminology and results from symplectic mechanics necessary to describe solutions of the Schr\"{o}dinger equation \eqref{Schrodinger_equation} with quadratic Hamiltonians \eqref{quadratic_hamiltonian_symmetric_matrix}. This is based on de Gosson's book \cite{deGosson_Symplectic_Methods}, specifically chapter 1-3, 7 and 15: 

Let $J$ denote the standard symplectic matrix 
\begin{align*}
    J:= \begin{pmatrix}0 & I \\ -I & 0\end{pmatrix},
\end{align*}
where $0$ and $I$ are the $n\times n$ zero and identity matrices, respectively. A useful observation is that the inverse of $J$ coincides with the transpose so that $J^{-1} = J^{T} = -J$. The \textit{symplectic (Lie) group} $\mathrm{Sp}(2n,\mathbb{R})$ is a closed subgroup of the general linear group $GL(2n,\mathbb{R})$, that consists of all matrices $S$ such that
\begin{align*}
    S^{T}J S = J. 
\end{align*}
Since the inverse $S^{-1}$ is also symplectic, this latter condition turns out to be equivalent to $S J S^{T}=J$.
Writing the matrix $S$ on block form
\begin{align*}
    S = \begin{pmatrix} A & B\\ C & D\end{pmatrix},
\end{align*}
it is straightforward to verify that symplectic matrices are characterized by the conditions
\begin{equation}
    \begin{aligned}
    A^{T}C, B^{T}D \ \ &\textit{are symmetric and} \ \ A^{T}D-C^{T}B = I\\
    &\ \ \ \ \ \ \ \ \ \ \ \ \iff\\
    A B^{T}, C D^{T} \ \ &\textit{are symmetric and} \ \ AD^{T}-BC^{T} = I. 
    \end{aligned}
    \label{symplectic_block_form_conditions}
\end{equation}
From this characterization, we also deduce that the inverse of $S$ is given by
\begin{align}
    S^{-1}= \begin{pmatrix}D^{T} & -B^{T}\\ -C^{T} & A^{T}\end{pmatrix}.
    \label{symplectic_matrix_inverse}
\end{align}
The standard symplectic form is denoted by $\sigma(z,z') := \langle Jz, z'\rangle$.
A basis $\{e_j, f_j\}_{j=1}^{n}$ of $\mathbb{R}^{n}\times \mathbb{R}^{n}$ is called a \textit{symplectic basis} if 
\begin{align*}
    \sigma(e_j, e_k) = 0 \ \ \land \ \ 
    \sigma(f_j, f_k) = 0 \ \ \land \ \ 
    \sigma(e_j, f_k) = -\delta_{jk} \ \ \forall \ \ j,k=1,2,\dots,n.
\end{align*}
One simple example is the \textit{canonical} symplectic basis $\{(c_j,0),(0,c_j)\}_{j=1}^{n}$, where $\{c_j\}_j$ is the canonical basis for $\mathbb{R}^{n}$. 

An important family of symplectic matrices are the so-called "free" symplectic matrices. These are matrices $S\in \mathrm{Sp}(2n,\mathbb{R})$ which on block form satisfy
\begin{align}
    S = \begin{pmatrix}A & B\\ C & D\end{pmatrix}, \ \ \det B \neq 0,
    \label{free_symplectic_matrix}
\end{align}
and every symplectic matrix can in fact be written as a product of two such matrices. To each free matrix $S$ on the form \eqref{free_symplectic_matrix}, we associate a quadratic form $W(x,x')$, called the generating function of $S$, given by
\begin{align*}
    W(x,x'):=\frac{1}{2}\langle DB^{-1} x, x\rangle - \langle B^{-1}x, x'\rangle + \frac{1}{2}\langle B^{-1}A x', x'\rangle.
\end{align*}
Conversely, starting out with a quadratic form 
\begin{align}
    W(x,x') = \frac{1}{2}\langle P x, x\rangle - \langle L^{-1}x, x'\rangle +\frac{1}{2}\langle Qx', x'\rangle,
    \label{generating_function_starting_point}
\end{align}
with real matrices such that $P = P^{T}$ and $Q=Q^{T}$, we can generate a corresponding free symplectic matrix
\begin{align}
    S_{W} = \begin{pmatrix} LQ & L \\ PLQ -(L^{-1})^{T} & PL\end{pmatrix}.
    \label{free_sympletic_matrix_from_generating_function}
\end{align}
Based on the generating function $W$, we associate unitary operators on $L^2(\mathbb{R}^{n})$ to the free matrix $S_{W}$. Referencing the generating function in \eqref{generating_function_starting_point} and by scaling according to the constant $\hbar$, these operators read 
\begin{align}
    \widehat{S}_{W,m}u(x) := (2\pi i \hbar)^{-\frac{n}{2}} \sqrt{|\det L|}^{-1} i^{m} \int_{\mathbb{R}^{n}} e^{\frac{i}{\hbar}W(x,x')}u(x')\mathrm{d}x',
    \label{quadratic_Fourier_transform}
\end{align}
where $m$ is an integer so that 
\begin{align*}
    m\pi \equiv \arg(\det L)\mod 2\pi. 
\end{align*}
We refer to the operators $\widehat{S}_{W,m}$ as the \textit{quadratic Fourier transforms} of $S_{W}$, and by fixing a branch of $\sqrt{z}$ so that $\sqrt{i}$ is well-defined, we obtain \textit{two} operators associated to $S_{W}$ that only differ by a sign. In particular, we choose the branch such that $\arg(\sqrt{i})\equiv \frac{\pi}{4} \mod 2\pi$. Since the inverse exists
\begin{align*}
    \widehat{S}_{W,m}^{-1} = \widehat{S}_{W^{*},m^{*}} \ \ \text{for} \ \ W^{*}(x,x') = -W(x',x) \ \ \text{and} \ \ m^{*}=m-n,
\end{align*}
we can form a group from compositions of the quadratic Fourier transforms of the free symplectic matrices. This group is known as the \textit{metaplectic group} $\mathrm{Mp}(2n,\mathbb{R})$, and it forms a double cover of the symplectic group $\mathrm{Sp}(2n, \mathbb{R})$ by extending the map $\widehat{S}_{W,m}\to S_{W}$ to a surjective group homomorphism
\begin{align*}
    \pi^{\mathrm{Mp}}:\mathrm{Mp}(2n,\mathbb{R})\to \mathrm{Sp}(2n,\mathbb{R}), \ \ \text{with} \ \ \ker(\pi^{\mathrm{Mp}})= \{\pm I\}. 
\end{align*}

To see how the symplectic and metaplectic group relates to the Schr\"{o}dinger equation with quadratic Hamiltonian $H$, we first recall the general Hamiltonian equations determining the time evolution of a point $z =(x,p)$, namely
\begin{align*}
    \frac{\partial x_j}{\partial t} = \frac{\partial H}{\partial p_j} \ \ \text{and} \ \ \frac{\partial p_j}{\partial t} = -\frac{\partial H}{\partial x_j} \ \ \text{for} \ \ j=1,2,\dots,n, 
\end{align*}
or more compactly
\begin{align*}
    \frac{\partial z}{\partial t} = J\nabla H \ \ \text{with} \ \ \nabla = (\partial_{x_1},\dots, \partial_{x_n}, \partial_{p_1}, \dots, \partial_{p_n}).  
\end{align*}
When the Hamiltonian is on the form $H(z) = \frac{1}{2}\langle Mz, z\rangle$, where $M$ is a real-symmetric matrix, the Hamiltonian equations reduce to $\frac{\partial z}{\partial t} = J M z$. Starting at time $t=0$ from the point $z(0)=z_{0}$, it is clear that $z(t) = S^{H}_{t}(z_0)$ with $S^{H}_{t} := \exp(tJM)$ solves the initial value problem. 

We now consider the \textit{Lie algebra of the symplectic group}, also known as the \textit{symplectic algebra}, denoted by $\mathfrak{sp}(2n,\mathbb{R})$, which consists of all matrices $X$ such that
\begin{align*}
    XJ+JX^{T}=0.
\end{align*}
By inspection, we find that the matrix $JM$ (and also $tJM$) belongs to $\mathfrak{sp}(2n,\mathbb{R})$. Conversely, for any $X\in \mathfrak{sp}(2n,\mathbb{R})$, we have that $JX$ is symmetric. Hence, any quadratic Hamiltonian $H$ can be expressed as
\begin{align}
    H(z) = -\frac{1}{2}\langle JX z, z\rangle \ \ \text{for some} \ \ X\in \mathfrak{sp}(2n,\mathbb{R}). 
    \label{quadratic_hamiltonian_symplectic_algebra}
\end{align}
Since the exponential maps the symplectic \textit{algebra} into the symplectic \textit{group}, we must have that the operator $S_{t}^{H}=\exp(tX)$ is symplectic for any fixed $t$, and in turn $\left(S_{t}^{H}\right)_{t\geq 0}$ is a one-parameter subgroup of $\mathrm{Sp}(2n,\mathbb{R})$. 
 
We also consider the \textit{Lie algebra of the metaplectic group}, or simply the \textit{metaplectic algebra}, denoted by $\mathfrak{mp}(2n,\mathbb{R})$. An essential observation is that the metaplectic algebra $\mathfrak{mp}(2n,\mathbb{R})$ is \textit{isomorphic} to $\mathfrak{sp}(2n,\mathbb{R})$, which follows from the fact that $\mathrm{Mp}(2n,\mathbb{R})$ is a covering group of $\mathrm{Sp}(2n,\mathbb{R})$. In particular, there is an explicit isomorphism $F: \mathfrak{sp}(2n,\mathbb{R})\to \mathfrak{mp}(2n,\mathbb{R})$ so that the following diagram commutes:
\begin{equation}
    \begin{tikzcd}
        \mathfrak{mp}(2n,\mathbb{R}) \arrow{r}{F^{-1}} \arrow[swap]{d}{\exp} & \mathfrak{sp}(2n,\mathbb{R}) \arrow{d}{\exp} \\
        \mathrm{Mp}(2n,\mathbb{R}) \arrow{r}{\pi^{\mathrm{Mp}}}& \mathrm{Sp}(2n,\mathbb{R})
\end{tikzcd}
\label{commuting_diagram_metaplecitc_symplectic_algebra_group}
\end{equation}
What follows are the main results of the section. For details and proofs, we refer to chapter 15 in \cite{deGosson_Symplectic_Methods} (especially Theorem 352 and Corollary 355). 
\begin{theorem}
Let $H$ be the quadratic Hamiltonian of the form $H(z) = -\frac{1}{2}\langle JX z, z\rangle$ for some $X\in \mathfrak{sp}(2n,\mathbb{R})$. The linear mapping $F$, which to $X$ associates the operator $F(X):=-\frac{i}{\hbar}\widehat{H}$ where $\widehat{H}\xleftrightarrow{\textnormal{Weyl}}H$, then satisfies 
\begin{align*}
    [F(X), F(X')] = F([X,X']) \ \ \forall \ \ X,X'\in \mathfrak{sp}(2n, \mathbb{R}),   
\end{align*}
and $F$ forms an isomorphism $\mathfrak{sp}(2n,\mathbb{R})\to \mathfrak{mp}(2n, \mathbb{R})$
so that diagram \eqref{commuting_diagram_metaplecitc_symplectic_algebra_group} commutes. 
\end{theorem}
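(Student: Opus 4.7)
My plan is to verify the three assertions in order: that $F$ takes values in $\mathfrak{mp}(2n,\mathbb{R})$, that it preserves Lie brackets, that it is a linear isomorphism, and finally that the diagram commutes.

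First, linearity of $F$ is immediate: the map $X \mapsto H_X(z) = -\tfrac{1}{2}\langle JXz, z\rangle$ is linear in $X$, and Weyl quantization \eqref{Weyl_quantization_procedure} is linear in the symbol, so $X \mapsto -\tfrac{i}{\hbar}\widehat{H_X}$ is linear. Injectivity is equally routine: $F(X) = 0$ forces the symbol $H_X \equiv 0$, which by non-degeneracy of $\langle JX \cdot, \cdot\rangle$ and invertibility of $J$ forces $X = 0$. Surjectivity will follow once we know $F$ lands in $\mathfrak{mp}(2n,\mathbb{R})$, because the two algebras have the same (finite) dimension $n(2n+1)$.

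The crux is the bracket identity $[F(X), F(X')] = F([X, X'])$. The natural tool is the Moyal (or Weyl) calculus: for two Weyl symbols $a, b$, the commutator $[\widehat{a}, \widehat{b}]$ is the Weyl quantization of $\tfrac{\hbar}{i}\{a,b\} + O(\hbar^3)$, where $\{a,b\}$ is the Poisson bracket, and for two \emph{quadratic} symbols the $O(\hbar^3)$ terms vanish identically. Thus $[\widehat{H_X}, \widehat{H_{X'}}] = \tfrac{\hbar}{i}\,\widehat{\{H_X, H_{X'}\}}$ exactly. A direct computation with $H_X(z) = -\tfrac{1}{2}\langle JXz, z\rangle$, using $J^T = -J$ and the identities \eqref{symplectic_block_form_conditions}, yields the classical identity $\{H_X, H_{X'}\} = H_{[X, X']}$ in the symplectic algebra. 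Combining these two facts gives
\begin{align*}
    [F(X), F(X')] \;=\; \Bigl(-\tfrac{i}{\hbar}\Bigr)^{2}[\widehat{H_X}, \widehat{H_{X'}}] \;=\; -\tfrac{i}{\hbar}\,\widehat{\{H_X, H_{X'}\}} \;=\; F([X,X']),
\end{align*}
which is the required Lie algebra homomorphism property. I expect this step, especially the verification $\{H_X, H_{X'}\} = H_{[X,X']}$, to be the main computational obstacle, but it is a bookkeeping exercise once one exploits the antisymmetry of $JX$ carefully.

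Once $F$ is a Lie algebra homomorphism whose image lies in the operators generating one-parameter unitary groups on $L^2(\mathbb{R}^n)$, I would close the diagram \eqref{commuting_diagram_metaplecitc_symplectic_algebra_group} as follows. The operator $U_t := \exp(tF(X)) = \exp(-\tfrac{it}{\hbar}\widehat{H_X})$ is, by Stone's theorem, the unitary solution operator of the Schr\"odinger equation \eqref{Schrodinger_equation} with Hamiltonian $H_X$. On the classical side, $S_t := \exp(tX) \in \mathrm{Sp}(2n,\mathbb{R})$ is the Hamiltonian flow of $H_X$, as discussed just before the statement. By Egorov's theorem for quadratic Hamiltonians (which is again exact because no higher-order Moyal terms appear), $U_t^{-1}\widehat{a}\,U_t = \widehat{a \circ S_t}$ for every Weyl symbol $a$. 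Applying this with $a(z) = z_j$ shows that $\pi^{\mathrm{Mp}}(U_t) = S_t$, so that the two exponential paths in the diagram agree. Evaluating at $t = 1$ yields $\pi^{\mathrm{Mp}}(\exp F(X)) = \exp(X)$, which is commutativity of the diagram, and simultaneously confirms that $F(X) \in \mathfrak{mp}(2n,\mathbb{R})$. By the dimension count mentioned above, $F$ is then the desired isomorphism.
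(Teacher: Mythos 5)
First, note that the paper does not actually prove this theorem: it is quoted from de Gosson's book (Theorem 352 and Corollary 355 of \cite{deGosson_Symplectic_Methods}), where the argument runs by explicitly verifying that the quadratic Fourier transform $\widehat{S}_{W_t,m_t}$ built from the generating function of $\exp(tX)$ (for small $t$, when this matrix is free) solves the Schr\"odinger equation, and then invoking uniqueness of the Cauchy problem and the factorization of symplectic matrices into free ones. So your proposal must be judged on its own. The linear-algebra part is fine: linearity and injectivity of $F$ are as you say (injectivity because $JX$ is symmetric, so the quadratic form determines it), and surjectivity by dimension count is legitimate once the image is known to lie in $\mathfrak{mp}(2n,\mathbb{R})$, since $\mathfrak{mp}\cong\mathfrak{sp}$ as the paper records. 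The bracket identity via the exact Moyal calculus for quadratic symbols is also the right mechanism, although your displayed chain picks up a stray sign: with $[\widehat{a},\widehat{b}]=\tfrac{\hbar}{i}\widehat{\{a,b\}}$ and $(-\tfrac{i}{\hbar})^2\cdot\tfrac{\hbar}{i}=+\tfrac{i}{\hbar}$, you need $\{H_X,H_{X'}\}=-H_{[X,X']}$ (or the opposite Poisson-bracket convention) for the chain to close; this is fixable bookkeeping, but as written the two sign conventions you quote are not mutually consistent.

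The genuine gap is in the last step, which is where the actual content of the theorem lives. You write that exact Egorov, applied to $a(z)=z_j$, ``shows that $\pi^{\mathrm{Mp}}(U_t)=S_t$.'' But $\pi^{\mathrm{Mp}}$ is only defined on $\mathrm{Mp}(2n,\mathbb{R})$, and membership of $U_t=\exp(-\tfrac{it}{\hbar}\widehat{H})$ in $\mathrm{Mp}(2n,\mathbb{R})$ is precisely the assertion $F(X)\in\mathfrak{mp}(2n,\mathbb{R})$ that you are trying to prove --- so the argument is circular as stated. What the intertwining relation $U_t^{-1}\widehat{z}_jU_t=\widehat{(S_tz)_j}$ actually gives you, via Stone--von Neumann/Schur, is that $U_t$ agrees with a metaplectic operator covering $S_t$ only \emph{up to a unimodular phase} $e^{i\theta(t)}$; the metaplectic group is the double cover of $\mathrm{Sp}(2n,\mathbb{R})$, not the circle extension, so symplectic covariance alone cannot place $U_t$ in $\mathrm{Mp}(2n,\mathbb{R})$. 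Eliminating the phase requires an additional input --- either the explicit representation of $U_t$ as a quadratic Fourier transform \eqref{quadratic_Fourier_transform} with its $i^{m}\sqrt{|\det L|}^{-1}$ normalization (de Gosson's route), or a continuity-plus-lifting argument comparing $t\mapsto U_t$ with the unique lift $t\mapsto\widehat{S}^{H}_t$ of the path $t\mapsto S_t$ through the covering $\pi^{\mathrm{Mp}}$, starting from $U_0=\widehat{S}^H_0=I$, together with uniqueness for the Schr\"odinger Cauchy problem. Until one of these is supplied, both the claim $F(X)\in\mathfrak{mp}(2n,\mathbb{R})$ and the commutativity of diagram \eqref{commuting_diagram_metaplecitc_symplectic_algebra_group} remain unproven.
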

 
Based on the isomorphism $F$ and since diagram \eqref{commuting_diagram_metaplecitc_symplectic_algebra_group} commutes, we can describe the solution of Schr\"{o}dinger equation \eqref{Schrodinger_equation} with quadratic Hamiltonians as a lift of the flow $t\mapsto S^{H}_{t}$ in $\mathrm{Sp}(2n,\mathbb{R})$ into the unique path $t\mapsto \widehat{S}^{H}_{t}$ in $\mathrm{Mp}(2n, \mathbb{R})$ so that $\widehat{S}^{H}_{0}=I$.

\begin{corollary}
\label{corollary_quadractic_Hamiltonian_solution_quadratic_Fourier_transform}
Let the Hamiltonian $H$ be quadratic, and let $t\mapsto \widehat{S}^{H}_{t}$ be the lift to $\mathrm{Mp}(2n, \mathbb{R})$ of the flow $t\mapsto S^{H}_{t}$. Then for any $u_{0}\in \pazocal{S}(\mathbb{R}^{n})$, the function $u(x,t) = \widehat{S}^{H}_{t}u_{0}(x)$ is a solution of the initial value problem 
\begin{align*}
    i\hbar\frac{\partial u}{\partial t}(x,t) = \widehat{H}u(x,t), \ \ u(x,0) = u_{0}(x). 
\end{align*}
\end{corollary}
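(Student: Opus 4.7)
The plan is to verify the two requirements of the initial value problem separately: the initial condition $u(x,0)=u_{0}(x)$ and the Schr\"odinger equation itself.

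The initial condition is immediate from the definition of the lift: since $S^{H}_{0}=\exp(0\cdot JM)=I$ and the lift $t\mapsto \widehat{S}^{H}_{t}$ is required to satisfy $\widehat{S}^{H}_{0}=I$, we get $u(x,0)=\widehat{S}^{H}_{0}u_{0}(x)=u_{0}(x)$.

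For the PDE, my plan is to exploit the commutativity of diagram \eqref{commuting_diagram_metaplecitc_symplectic_algebra_group} together with the previous theorem. Write the quadratic Hamiltonian as $H(z)=-\tfrac{1}{2}\langle JX z, z\rangle$ for some $X\in\mathfrak{sp}(2n,\mathbb{R})$. The symplectic flow is $S^{H}_{t}=\exp(tX)$, and commutativity of the diagram forces the lift to be $\widehat{S}^{H}_{t}=\exp\bigl(tF(X)\bigr)$ in $\mathrm{Mp}(2n,\mathbb{R})$, where by the previous theorem $F(X)=-\tfrac{i}{\hbar}\widehat{H}$. Now I would differentiate $u(x,t)=\widehat{S}^{H}_{t}u_{0}(x)$ in $t$ using the one-parameter group property:
\begin{align*}
\frac{\partial u}{\partial t}(x,t)=F(X)\,\widehat{S}^{H}_{t}u_{0}(x)=-\frac{i}{\hbar}\widehat{H}\,u(x,t),
\end{align*}
which upon multiplying by $i\hbar$ yields the Schr\"odinger equation.

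The subtle point, and the only one requiring care, is the analytic justification that $t\mapsto \widehat{S}^{H}_{t}u_{0}$ is genuinely differentiable with derivative $F(X)u$ acting on the Schwartz class. For $u_{0}\in\pazocal{S}(\mathbb{R}^{n})$, the operator $\widehat{H}$ (a differential operator with polynomial coefficients of degree at most two) preserves $\pazocal{S}(\mathbb{R}^{n})$, and consequently the metaplectic evolution $\widehat{S}^{H}_{t}$ also preserves $\pazocal{S}(\mathbb{R}^{n})$; this is precisely the content needed to apply the Stone-type differentiation of the one-parameter subgroup $\exp\bigl(-\tfrac{it}{\hbar}\widehat{H}\bigr)$ pointwise on Schwartz data. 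With that invariance in hand, the computation above is valid and the corollary follows directly from the preceding theorem without any further work; essentially this is a statement that the theorem already encodes the corollary via the exponential functoriality, and I would present the argument in that order.
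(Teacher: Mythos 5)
Your argument is correct and is essentially the standard one: the paper itself gives no proof of this corollary, deferring to Chapter 15 of de Gosson's book (Theorem 352 and Corollary 355), and the proof there proceeds exactly as you outline --- identify the unique lift with $\exp\bigl(tF(X)\bigr)=\exp\bigl(-\tfrac{it}{\hbar}\widehat{H}\bigr)$ via the commuting diagram and differentiate the one-parameter group on Schwartz data. Your explicit acknowledgement of the uniqueness-of-lift step and of the differentiability of $t\mapsto\widehat{S}^{H}_{t}u_{0}$ in $\pazocal{S}(\mathbb{R}^{n})$ covers the only points requiring care.
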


In particular, whenever $\exp(tJM)$ is a free symplectic matrix, we can express the solution of \eqref{Schrodinger_equation} as a quadratic Fourier transform \eqref{quadratic_Fourier_transform} of the initial condition.

\subsection{Hardy's Uncertainty Principle and the Wigner distribution}
\label{section_Hardys_UP_Wigner_distribution}
For a function $f:\mathbb{R}^{n}\to \mathbb{C}$, we normalize the Fourier transform according to \eqref{normalization_Fourier_transform}. Among the many uncertainty principles, Hardy's uncertainty principle is a precise statement regarding the largest possible decay of the pair $(f, \hat{f})$. The original 1933-paper \cite{Hardy_1933_A_Thm_Concerning_Fourier_Transforms} by Hardy covers the $1$-dimensional case, which has later been extended to higher dimensions (see \cite{FollandGerald1997TupA}). 

\begin{theorem}
\textnormal{(Hardy's Uncertainty Principle)} Suppose $f\in L^2(\mathbb{R}^{n})$ satisfies the decay conditions 
\begin{align*}
    |f(x)| \leq K e^{-\alpha |x|^2} \ \ \text{and} \ \ |\hat{f}(\xi)| \leq K e^{-\beta |\xi|^2}
\end{align*}
for some constants $\alpha,\beta,K>0$.
\begin{enumerate}[label =  \textnormal{(\roman*)},itemsep=0.4ex, before={\everymath{\displaystyle}}]
    \item If $4 \alpha \beta >1$, then $f\equiv 0$.
    \item If $4 \alpha \beta=1$, then $f=c e^{-\alpha |x|^2}$ for some $c\in\mathbb{C}$.
\end{enumerate}
\end{theorem}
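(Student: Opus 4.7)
My plan is to reduce to dimension one by a tensorization argument and then apply complex-analytic methods to an entire extension of $\hat{f}$. Since the Gaussian decay factors across coordinates, I would argue variable by variable: applying the one-dimensional statement to the partial Fourier transform in $x_{1}$ (with the remaining variables frozen and inheriting Gaussian bounds) recovers the optimal Gaussian width in $x_{1}$, and iterating over all coordinates yields the $n$-dimensional conclusion. So it suffices to treat $n=1$.

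First I would use the bound $|f(x)|\leq K e^{-\alpha x^{2}}$ to extend the defining integral of $\hat{f}$ to all of $\mathbb{C}$, obtaining an entire function $F(z)$. Completing the square in the exponent yields
\begin{align*}
    |F(\xi+i\eta)|\leq C e^{\eta^{2}/(4\alpha)},\qquad \xi,\eta\in\mathbb{R},
\end{align*}
so $F$ has order at most $2$. I would then consider the auxiliary entire function $G(z):=e^{\beta z^{2}}F(z)$. The hypothesis $|\hat{f}(\xi)|\leq K e^{-\beta\xi^{2}}$ gives $|G(\xi)|\leq K$ on $\mathbb{R}$, while the previous estimate yields
\begin{align*}
    |G(iy)|\leq C e^{(1/(4\alpha)-\beta)y^{2}},\qquad y\in\mathbb{R}.
\end{align*}
When $4\alpha\beta=1$ the exponent vanishes and $G$ is bounded on $i\mathbb{R}$; when $4\alpha\beta>1$ the exponent is negative and $G$ decays exponentially on $i\mathbb{R}$. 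In either case $G$ is entire of order at most $2$ and bounded on the two coordinate axes.

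Next, I would apply the Phragm\'en--Lindel\"of principle in each of the four open quadrants of $\mathbb{C}$ to conclude that $G$ is bounded on $\mathbb{C}$, and hence constant by Liouville. In case (ii), $G\equiv c$ gives $\hat{f}(\xi)=c e^{-\beta\xi^{2}}$, and inverting the Fourier transform with the prescribed normalization returns $f(x)=c'e^{-\alpha x^{2}}$. In case (i), the exponential decay of $G$ along $i\mathbb{R}$ forces $c=0$, whence $\hat{f}\equiv 0$ and $f\equiv 0$. The main technical hurdle is the Phragm\'en--Lindel\"of step itself: each quadrant has opening $\pi/2$, which matches the order $2$ of $G$ critically, so the direct form of the principle is not applicable. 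The standard remedy is to introduce an auxiliary damping factor such as $e^{-\varepsilon z^{2}}$ that strictly lowers the growth in a chosen pair of quadrants, deduce $\varepsilon$-uniform bounds on those quadrants from the axis bounds, and then pass to the limit $\varepsilon\to 0^{+}$ (repeating the argument in the complementary pair of quadrants with the reciprocal damping). It is precisely here that the sharp numerical relation $4\alpha\beta\geq 1$ is genuinely exploited.
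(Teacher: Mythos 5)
First, note that the paper does not prove this theorem at all: it is quoted as a classical result with references to Hardy's 1933 paper and to Folland--Sitaram for the $n$-dimensional version, so there is no in-paper proof to compare against. Judged on its own, your outline has the right classical architecture (entire extension of $\hat f$, the auxiliary function $G=e^{\beta z^2}\hat f$, Phragm\'en--Lindel\"of, Liouville), but the step you yourself flag as the ``main technical hurdle'' is genuinely broken as described. The information ``$G$ is entire of order $2$ and bounded on both coordinate axes'' can never imply boundedness: $G(z)=\cos(cz^2)$ is entire of order $2$, bounded by $1$ on $\mathbb{R}$ and on $i\mathbb{R}$, and blows up like $e^{cr^2}$ along $\arg z=\pi/4$. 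So some further input is indispensable, and your proposed remedy does not supply it. Concretely, a factor $e^{-\varepsilon z^2}$ has modulus $e^{-\varepsilon(\xi^2-\eta^2)}$, which grows along one boundary ray of each quadrant (destroying the axis bound there), while the rotated variant $e^{i\varepsilon z^2}$ has modulus $e^{-2\varepsilon\xi\eta}$, which is $1$ on both axes but is negligible near them: along a ray $\arg z=\delta$ with $\delta$ small one still has $|G(z)e^{i\varepsilon z^2}|\lesssim e^{(2\beta\cos^2\delta-\varepsilon\sin 2\delta)|z|^2}$ with a positive coefficient. The order in the quadrant remains exactly $2$ with positive type, which is precisely the critical case where Phragm\'en--Lindel\"of at opening $\pi/2$ fails, so no $\varepsilon$-uniform bound is obtained and the limit $\varepsilon\to 0^+$ has nothing to act on.

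The classical repair is different and does use the extra (anisotropic) information you derived but then discarded, namely $|\hat f(\xi+i\eta)|\le Ce^{\eta^2/(4\alpha)}$ together with the decay $|\hat f(\xi)|\le Ke^{-\beta\xi^2}$ on $\mathbb{R}$. One works in sectors $S_{\theta_0}=\{0\le\arg z\le\theta_0\}$ of opening $\theta_0<\pi/2$, where order $2$ is \emph{subcritical}, with the auxiliary function $\hat f(z)\exp\big((\beta+i\beta\cot\theta_0)z^2\big)$: the real part $e^{\beta\xi^2}$ is absorbed by the decay of $\hat f$ on the real axis, the imaginary part is chosen (large, of size $\beta\cot\theta_0$, not small $\varepsilon$) so that the product is bounded on the ray $\arg z=\theta_0$, and Phragm\'en--Lindel\"of then gives a bound on each interior ray; letting $\theta_0\uparrow\pi/2$ yields $|G(re^{i\theta})|\le C$ for every $\theta$, and only then does Liouville apply. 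A second, smaller gap is the reduction to $n=1$: freezing $x_2,\dots,x_n$ and taking the partial Fourier transform in $x_1$ produces a function whose one-dimensional Fourier transform is $\mathcal{F}_1 f(\xi_1,x_2,\dots,x_n)$, on which the hypothesis (which concerns the full transform $\hat f$) gives no Gaussian bound. The standard fixes are either to freeze the \emph{dual} variables (apply the partial Fourier transform in $x_2,\dots,x_n$ first, so that the relevant one-dimensional pair is $\big(\mathcal{F}_{x'}f(x_1,\xi'),\,\hat f(\xi_1,\xi')\big)$, both of which inherit Gaussian bounds) or to use the Radon-transform slicing $g_e(t)=\int_{e^\perp}f(te+y)\,\mathrm{d}y$ with $\hat g_e(s)=c\,\hat f(se)$. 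With these two repairs your outline becomes the standard complex-analytic proof.
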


In the joint time-frequency representation, there are analogous statements to Hardy's uncertainty principle. While these may originally have been formulated in terms of the \textit{Short-Time Fourier Transform} or the \textit{cross-Ambiguity function}, we shall present them equivalently in terms of the \textit{cross-Wigner distribution}. We normalize the cross-Wigner distribution using $\hbar$ to match our Weyl quantization procedure in \eqref{Weyl_quantization_procedure} for a \textit{physical system} (otherwise we can think of $\hbar=1$). Namely, for a pair $(f,g)\in \pazocal{S}\times \pazocal{S}'(\mathbb{R}^{n})$ the \textit{cross-Wigner distribution} is given by
\begin{align*}
    \pazocal{W}^{\hbar}(f,g)(x,\xi) := (2\pi\hbar)^{-n}\int_{\mathbb{R}^{n}} e^{-\frac{i}{\hbar} \langle \xi, y\rangle}f\left(x+\frac{y}{2}\right)\overline{g\left(x-\frac{y}{2}\right)}\mathrm{d}y. 
\end{align*}
In the case when $f = g$, we write $\pazocal{W}^{\hbar}(f,f) = \pazocal{W}^{\hbar}f$ and refer to it simply as the \textit{Wigner distribution} or \textit{Wigner transform of} $f$. The first analog of Hardy's uncertainty principle in the joint representation was discovered by Gröchenig and Zimmermann in \cite{Grochenig_Zimmermann_2001} (see Theorem 1.2 and Corollary 3.3). 
\begin{theorem}
\label{theorem_Hardys_UP_Wigner_joint_condition}
Suppose $(f,g)\in \pazocal{S}\times\pazocal{S}'(\mathbb{R}^{n})$ such that 
\begin{align*}
    |\pazocal{W}^{\hbar}(f,g)(x,\xi)| \leq K e^{-\left( \alpha|x|^2+\beta|\xi|^2\right)}
\end{align*}
for some constants $\alpha,\beta,K>0$.
\begin{enumerate}[label =  \textnormal{(\roman*)},itemsep=0.4ex, before={\everymath{\displaystyle}}]
    \item If $\alpha\beta>\hbar^{-2}$, then $\pazocal{W}^{\hbar}(f,g)\equiv 0$ so $f\equiv 0$ or $g\equiv 0$.
    \item If $\alpha\beta=\hbar^{-2}$ and $\pazocal{W}^{\hbar}(f,g)\not\equiv 0$, then both $f$ and $g$ are multiples of a time-frequency shift of the Gaussian $e^{-\frac{\alpha}{2}|x|^2}$, that is, $f$ and $g$ are multiples of $e^{i \langle \xi_0, x\rangle} e^{-\frac{\alpha}{2} |x-x_0|^2}$ for some constants $\xi_0, x_0 \in\mathbb{R}^n$.
\end{enumerate}
\end{theorem}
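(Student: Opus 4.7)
The plan is to reduce the statement to the classical Hardy uncertainty principle on the pair $(f, \hat f)$ by translating the phase-space bound on the cross-Wigner distribution into a matching Gaussian decay on a short-time Fourier transform (STFT), and then analyzing the latter via the Bargmann transform.

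First I would use the standard identity expressing the cross-Wigner distribution in terms of the (cross-)STFT. Writing $V_\psi f(x, \xi) := \int f(t)\overline{\psi(t-x)}e^{-i\xi\cdot t/\hbar}dt$ and $\check g(t) := g(-t)$, one finds after a straightforward change of variables
\[
\pazocal W^\hbar(f, g)(x, \xi) = c_n\, e^{2 i x \cdot \xi/\hbar}\, V_{\check g} f(2x, 2\xi),
\]
so the hypothesis on $|\pazocal W^\hbar(f, g)|$ transfers verbatim to a Gaussian decay bound on $|V_{\check g} f|$, and the critical threshold $\alpha\beta = \hbar^{-2}$ corresponds precisely to the sharp threshold for the STFT form of Hardy's principle (the factor-of-two dilation being absorbed into the exponents).

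Next I would prove Hardy's principle for the STFT via the Bargmann transform. For a Gaussian auxiliary window $\varphi(t) \propto e^{-|t|^2/(2\hbar)}$, the STFT factorizes as $V_\varphi f(x, \xi) = e^{-|z|^2/(4\hbar)} Bf(z)$ with $z = x - i\xi$ and $Bf$ entire on $\mathbb{C}^n$. The covariance relation of the STFT under change of window, $V_{\check g} f = \overline{V_\varphi \check g} \ast V_\varphi f$ up to phase, together with the Gaussian decay of $V_{\check g} f$, forces both $V_\varphi f$ and $V_\varphi \check g$ to have Gaussian decay at the same matched rate. Translating to the Bargmann side yields two entire functions of order $2$ with sharp growth bounds, and a Hadamard-factorization argument then forces each of them to be a pure exponential of a quadratic polynomial. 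Back on the physical side this gives that $f$ and $g$ are both time-frequency shifts of the same Gaussian $e^{-\alpha|x|^2/2}$ in case (ii), and are identically zero in case (i).

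The hardest step is the equality case (ii): the sharp constant must be preserved through every reduction, and any zero of the Bargmann transform $Bf$ would strictly worsen the Gaussian decay of $V_\varphi f$. A Jensen-type zero-counting argument is needed to rule out nontrivial polynomial factors, and the $\hbar^{-2}$ threshold arises exactly at the boundary where this zero-counting becomes critical. I expect this rigidity step to be the most delicate part of the argument; the Wigner-to-STFT reduction and the Bargmann identification are routine and preserve the sharp threshold.
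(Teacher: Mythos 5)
You should first note that the paper does not prove Theorem \ref{theorem_Hardys_UP_Wigner_joint_condition} at all: it is quoted as a known result of Gr\"ochenig and Zimmermann \cite{Grochenig_Zimmermann_2001} (their Theorem 1.2 and Corollary 3.3), so there is no in-paper argument to compare against. Judged on its own, your sketch has a correct first step (the identity $\pazocal{W}^{\hbar}(f,g)(x,\xi)=c_n e^{2i\langle x,\xi\rangle/\hbar}V_{\check g}f(2x,2\xi)$ does transfer the hypothesis to the STFT with the threshold intact), but the middle step contains a genuine gap. The change-of-window formula is a \emph{twisted convolution}, $V_{g}f=\langle\gamma,g\rangle^{-1}\,(V_{\gamma}f)\,\natural\,(V_{g}\gamma)$, and a pointwise Gaussian bound on a (twisted) convolution does not let you ``deconvolve'' to obtain Gaussian bounds on the two factors; there is no inequality running in that direction, and in any case bounding $V_{\varphi}\check g$ would require prior information about $g$, which is circular. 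This is precisely the step that the known proofs are designed to avoid: Gr\"ochenig--Zimmermann exploit the fundamental identity $V_gf(x,\xi)=e^{-i\langle x,\xi\rangle}V_{\hat g}\hat f(\xi,-x)$ together with the fact that $\xi\mapsto V_gf(x,\xi)$ is, for fixed $x$, the Fourier transform of $t\mapsto f(t)\overline{g(t-x)}$, and then apply classical Hardy; an alternative complex-analytic route works with the \emph{product} $Bf(z)\,\overline{Bg(\bar z)}$, which is entire and is controlled by the Wigner bound through a Weierstrass (Gaussian convolution) transform --- a convolution applied in the tractable direction, not deconvolved.

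Two further points would need attention even after repairing that step. First, with the isotropic window $\varphi(t)\propto e^{-|t|^2/(2\hbar)}$ the Gaussian-convolution step degrades the exponents and loses the sharp constant exactly at $\alpha\beta=\hbar^{-2}$; you must use the anisotropic window $e^{-\alpha|t|^2/2}$ matched to the hypothesis (equivalently, first reduce to $\alpha=\beta=\hbar^{-1}$ by a metaplectic dilation, which is legitimate since the class of bounds is dilation-covariant). Second, the hypothesis places $g$ only in $\pazocal{S}'(\mathbb{R}^n)$, so the Bargmann transform and the Hadamard factorization of $Bg$ must be carried out for tempered distributions (where $Bg$ is entire of finite order with polynomial-Gaussian growth); your sketch treats $f$ and $g$ symmetrically as if both were Schwartz. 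The closing Hadamard/Jensen rigidity argument you outline for case (ii) is the right idea once one has sharp growth bounds on a genuinely entire object such as $Bf(z)\overline{Bg(\bar z)}$, but as written the bounds it needs are exactly what the unjustified deconvolution step was supposed to supply.
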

Later in \cite{Bonami_Demange_Jaming_Hardy_UP_2003}, several estimates on the largest possible decay of the Ambiguity function (or equivalently the Wigner distribution) have been derived. One of these results \textit{separates} the decay conditions in the $x$- and $\xi$-direction (see Corollary 6.5.). 

\begin{theorem}
\label{theorem_Hardy_UP_Wigner_distribution_separate_decay}
Suppose $f,g\in L^2(\mathbb{R}^n)$ such that 
\begin{align*}
    \int_{\mathbb{R}^{2n}}\frac{\left|\pazocal{W}^{\hbar}(f,g)(x,\xi) \ e^{2\pi|x_j|^2}\right|^2}{(1+|x_j|)^{M}}\mathrm{d}x\mathrm{d}\xi < \infty \ \ \text{and} \ \ \int_{\mathbb{R}^{2n}}\frac{\left|\pazocal{W}^{\hbar}(f,g)(x,\xi) \  e^{(2\pi)^{-1}\hbar^{-2}|\xi_j|^2}\right|^2}{(1+|\xi_j|)^{N}}\mathrm{d}x\mathrm{d}\xi < \infty
\end{align*}
for some $j=1,\dots,n$. If $\min\{M,N\} \leq 1$, then $\pazocal{W}^{\hbar}(f,g)\equiv 0$ so $f\equiv 0$ or $g\equiv 0$.
\end{theorem}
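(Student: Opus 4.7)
The plan is to reduce the statement to the critical case of Theorem \ref{theorem_Hardys_UP_Wigner_joint_condition} applied in the $(x_j, \xi_j)$-plane, and then rule out the remaining Gaussian possibility using the polynomial weights $(1+|x_j|)^{-M}$ and $(1+|\xi_j|)^{-N}$. Throughout, I fix the coordinate index $j$ and observe that the Gaussian exponents $\alpha := 2\pi$ and $\beta := (2\pi)^{-1}\hbar^{-2}$ appearing in the hypotheses satisfy the critical relation $\alpha\beta = \hbar^{-2}$. By Fubini, the two integrability conditions are equivalent to weighted $L^2$-estimates for the planar function
\begin{align*}
    \Phi(x_j,\xi_j) := \left(\int_{\mathbb{R}^{2n-2}} |\pazocal{W}^{\hbar}(f,g)(x,\xi)|^2 \,\mathrm{d}x'\mathrm{d}\xi'\right)^{1/2},
\end{align*}
where $x' := (x_1,\dots,x_{j-1},x_{j+1},\dots,x_n)$ and similarly for $\xi'$; namely, $\Phi(x_j,\xi_j) e^{\alpha|x_j|^2}(1+|x_j|)^{-M/2}$ and $\Phi(x_j,\xi_j) e^{\beta|\xi_j|^2}(1+|\xi_j|)^{-N/2}$ both belong to $L^2(\mathbb{R}^2, \mathrm{d}x_j\mathrm{d}\xi_j)$, reducing the problem to a planar one.

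Next, I would exploit that $\pazocal{W}^{\hbar}(f,g)$ is, up to a symplectic Fourier transform, the cross-ambiguity function of $(f,g)$, which for $f,g\in L^2(\mathbb{R}^n)$ admits an entire extension on $\mathbb{C}^{2n}$ of order two. The weighted $L^2$-bounds on $\Phi$, combined with Plancherel in the transverse directions, translate into Gaussian-type growth estimates for this entire extension in the complexified variables $z\in\mathbb{C}$ and $\zeta\in\mathbb{C}$ replacing $x_j$ and $\xi_j$. A Phragm\'{e}n-Lindel\"{o}f argument applied to $e^{\alpha z^2 + \beta \zeta^2}\pazocal{W}^{\hbar}(f,g)$ in a suitable complex strip then forces this function to be a polynomial in $(z,\zeta)$ whose degree is controlled by $M/2$ and $N/2$.

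The critical Hardy UP (Theorem \ref{theorem_Hardys_UP_Wigner_joint_condition}(ii)) indicates that any nonzero Wigner distribution with such decay must, up to a phase-space shift, coincide with the Wigner distribution of a pure Gaussian with widths exactly $\alpha$ and $\beta$. Substituting this ansatz back into the original integrability hypotheses, the Gaussian factors in the numerator cancel against those in $|\pazocal{W}^{\hbar}(f,g)|^2$, leaving a one-dimensional residual integral of the form $\int_{\mathbb{R}}(1+|x_j|)^{-M}\mathrm{d}x_j$ or $\int_{\mathbb{R}}(1+|\xi_j|)^{-N}\mathrm{d}\xi_j$. These diverge precisely when $M\leq 1$ or $N\leq 1$, so the hypothesis $\min\{M,N\}\leq 1$ rules out all Gaussian candidates and forces $\pazocal{W}^{\hbar}(f,g)\equiv 0$.

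The main obstacle is the second step: justifying the Phragm\'{e}n-Lindel\"{o}f/Hadamard-factorization argument starting only from weighted $L^2$-hypotheses rather than from the pointwise decay used in the classical Hardy statement. A clean way to bypass this is to prove (or invoke) a Beurling-type refinement of Hardy's UP for the cross-ambiguity function that treats $L^2$-weighted decay conditions intrinsically through a single complex-analytic estimate on its entire extension, and then transfer the conclusion back to the Wigner distribution by symplectic Fourier duality.
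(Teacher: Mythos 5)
First, a point of reference: the paper does not prove this statement at all --- it is imported verbatim from Corollary 6.5 of \cite{Bonami_Demange_Jaming_Hardy_UP_2003}, so there is no in-paper argument to compare against. Judged on its own terms, your proposal gets the endgame right but is missing its engine. The final step --- observing that $\alpha=2\pi$, $\beta=(2\pi)^{-1}\hbar^{-2}$ sit exactly on the critical curve $\alpha\beta=\hbar^{-2}$, and that the critical Gaussian, once substituted back, leaves a residual factor $\int_{\mathbb{R}}(1+|x_j|)^{-M}\mathrm{d}x_j$ or $\int_{\mathbb{R}}(1+|\xi_j|)^{-N}\mathrm{d}\xi_j$ which diverges precisely when $\min\{M,N\}\leq 1$ --- is correct and is indeed how the polynomial weights kill the borderline case. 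But everything feeding into that step is deferred rather than proved.

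The genuine gap is the claimed reduction to Theorem \ref{theorem_Hardys_UP_Wigner_joint_condition}. That theorem requires a \emph{pointwise} bound $|\pazocal{W}^{\hbar}(f,g)(x,\xi)|\leq Ke^{-(\alpha|x|^2+\beta|\xi|^2)}$ jointly in all $2n$ variables, whereas the hypotheses here give only weighted $L^2$ conditions, and only in the single coordinate pair $(x_j,\xi_j)$, with no decay whatsoever assumed in the transverse directions. Your Fubini reduction to $\Phi(x_j,\xi_j)$ makes this worse rather than better: $\Phi$ is merely some nonnegative $L^2$ function of two variables satisfying Gaussian-weighted bounds, and such functions are plentiful and nonzero (any compactly supported one qualifies), so no uncertainty principle can apply to $\Phi$ unless the cross-Wigner structure is retained through the reduction --- which your sketch does not do. The Phragm\'en--Lindel\"of step has the same problem: the cross-ambiguity function of general $f,g\in L^2$ is not entire, and extracting an analytic extension in the complexified $(x_j,\xi_j)$ variables from $L^2$-weighted hypotheses alone is exactly the hard content of the Beurling--H\"ormander-type theorem with polynomial weights that \cite{Bonami_Demange_Jaming_Hardy_UP_2003} establish and then apply to the ambiguity function via its behaviour under the symplectic Fourier transform. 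Your closing paragraph acknowledges this by proposing to ``prove (or invoke) a Beurling-type refinement,'' but that refinement \emph{is} the theorem; invoking it is circular, and proving it is the entire remaining work. As it stands, the proposal is a correct framing of why $\min\{M,N\}\leq 1$ is the right threshold, wrapped around an unproved core.
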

From the above result, we easily deduce sufficient decay conditions for the Wigner distribution similar to that of Theorem \ref{theorem_Hardys_UP_Wigner_joint_condition}, but with the $x$- and $\xi$-direction separated. 
 
\begin{corollary}
\label{corollary_Hardys_UP_Wigner_distribution}
Suppose $f,g \in L^2(\mathbb{R}^n)$ such that
\begin{align*}
    |\pazocal{W}^{\hbar}(f,g)(x,\xi)| \leq K e^{-\sum_{j}\alpha_j|x_j|^2} \ \ \text{and} \ \ |\pazocal{W}^{\hbar}(f,g)(x,\xi)| \leq K e^{-\sum_j \beta_j |\xi_j|^2}
\end{align*}
for some constants $\alpha_j,\beta_j,K>0$.
If for some $j=1,\dots,n$ the product $\alpha_j\beta_j>\hbar^{-2}$, then $\pazocal{W}^{\hbar}(f,g)\equiv 0$ so $f\equiv 0$ or $g\equiv 0$.
\end{corollary}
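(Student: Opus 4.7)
The plan is to verify the integrability hypotheses of Theorem~\ref{theorem_Hardy_UP_Wigner_distribution_separate_decay} with $M = N = 0$, so that the constraint $\min\{M, N\} \leq 1$ holds trivially and the theorem delivers $\pazocal{W}^{\hbar}(f, g) \equiv 0$. Since both pointwise decay bounds on $|\pazocal{W}^{\hbar}(f,g)|$ are available, I combine them by the geometric-mean interpolation
\begin{equation*}
|\pazocal{W}^{\hbar}(f,g)(x,\xi)|^{2} = |\pazocal{W}^{\hbar}(f,g)|^{2(1-t)} \cdot |\pazocal{W}^{\hbar}(f,g)|^{2t} \leq K^{2} \exp\Bigl(-2(1-t)\textstyle\sum_{k}\alpha_{k}|x_{k}|^{2} - 2t\sum_{k}\beta_{k}|\xi_{k}|^{2}\Bigr),
\end{equation*}
valid for every $t \in [0,1]$, by inserting the $x$-bound into the first factor and the $\xi$-bound into the second.

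Substituting this estimate into the first integral of Theorem~\ref{theorem_Hardy_UP_Wigner_distribution_separate_decay}, every variable except $x_{j}$ contributes a finite Gaussian, reducing the problem to $\int_{\mathbb{R}} e^{(4\pi - 2(1-t)\alpha_{j})|x_{j}|^{2}}\,\mathrm{d}x_{j}$, which is finite whenever $\alpha_{j}(1-t) > 2\pi$. An analogous calculation with a possibly distinct parameter $t' \in [0,1]$ for the second integral yields convergence whenever $\beta_{j}t' > (2\pi\hbar^{2})^{-1}$. Because the two parameters may be chosen independently, letting $t \to 0^{+}$ and $t' \to 1^{-}$ shows that both integrals converge provided $\alpha_{j} > 2\pi$ and $\beta_{j} > (2\pi\hbar^{2})^{-1}$ \emph{individually}.

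To reduce the corollary's hypothesis $\alpha_{j}\beta_{j} > \hbar^{-2}$ to these individual thresholds, I apply the $L^{2}$-unitary dilation $f_{\lambda}(x) := \lambda^{n/2} f(\lambda x)$ (and likewise $g_{\lambda}$). A short computation from the definition of the cross-Wigner distribution gives
\begin{equation*}
\pazocal{W}^{\hbar}(f_{\lambda}, g_{\lambda})(x, \xi) = \pazocal{W}^{\hbar}(f, g)(\lambda x, \xi/\lambda),
\end{equation*}
so the effective constants transform as $(\alpha_{j}, \beta_{j}) \mapsto (\alpha_{j}\lambda^{2}, \beta_{j}/\lambda^{2})$. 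The interval $\lambda^{2} \in (2\pi/\alpha_{j},\, 2\pi\hbar^{2}\beta_{j})$ is nonempty precisely when $\alpha_{j}\beta_{j} > \hbar^{-2}$, and for any such $\lambda$ the preceding step applies to $(f_{\lambda}, g_{\lambda})$. Theorem~\ref{theorem_Hardy_UP_Wigner_distribution_separate_decay} then yields $\pazocal{W}^{\hbar}(f_{\lambda}, g_{\lambda}) \equiv 0$, and since the dilation is a bijection on $L^{2}(\mathbb{R}^{n})$ this gives $f \equiv 0$ or $g \equiv 0$, hence $\pazocal{W}^{\hbar}(f,g) \equiv 0$. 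The main subtlety is that squaring the Wigner decay bound introduces a factor of $4$ in the product of admissible exponents; the interpolation parameter $t$ and the dilation each recover a factor of $2$, together matching the sharp threshold consistent with Theorem~\ref{theorem_Hardys_UP_Wigner_joint_condition}.
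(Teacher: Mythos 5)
Your proof is correct and follows the route the paper intends: the paper states Corollary~\ref{corollary_Hardys_UP_Wigner_distribution} as an ``easy deduction'' from Theorem~\ref{theorem_Hardy_UP_Wigner_distribution_separate_decay} without writing out the details, and your argument supplies exactly that verification (with $M=N=0$). The two ingredients you add --- the geometric-mean interpolation to secure joint integrability in all $2n$ variables, and the unitary dilation $f_\lambda$, under which $\pazocal{W}^{\hbar}(f_\lambda,g_\lambda)(x,\xi)=\pazocal{W}^{\hbar}(f,g)(\lambda x,\xi/\lambda)$ converts the product condition $\alpha_j\beta_j>\hbar^{-2}$ into the individual thresholds $\alpha_j\lambda^2>2\pi$ and $\beta_j/\lambda^2>(2\pi\hbar^2)^{-1}$ --- are both necessary and correctly executed.
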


What is remarkable about the latest statement is that, although we require decay in every direction, it suffices to consider the \textit{largest} combined decay for \textit{any} given pair $(x_j, \xi_j)$ to conclude that the Wigner distribution is zero. Such conditions have also been derived for the separate representation in \cite{deGosson_Luef_2007}, \cite{deGosson_Luef_2009} based on the \textit{symplectic capacity} of the ellipsoid associated to the exponents.\footnote{In the separate representation, there is a much more general result stated for tempered distributions, see Corollary 1.6.9. in \cite{Demange2010UncertaintyPA}. Notably, if we restrict to functions in $L^2(\mathbb{R}^{n})$, it suffices to have large enough decay for one pair $(x_j,\xi_j)$ with \textit{no decay condition} in the other directions to conclude that the function is zero.} In the same vein, in \cite{deGosson_2021}, a similar result to Corollary \ref{corollary_Hardys_UP_Wigner_distribution} is obtained for the Wigner distribution. We shall utilize the uncertainty principle stated in Corollary \ref{corollary_Hardys_UP_Wigner_distribution} when we derive uniqueness results for the Schr\"{o}dinger equation with quadratic Hamiltonians. 

\section{Hardy's Uncertainty Principle for Quadratic Hamiltonians}
\label{section_Hardys_UP_for_quadratic_Hamiltonians}
We shall prove the following Hardy type estimate: 
\begin{theorem}
\label{theorem_Hardy_UP_quadratic_Hamiltonian}
Let $u(\cdot,t)\in \pazocal{S}(\mathbb{R}^{n})$ be the solution of the Schr\"{o}dinger equation \eqref{Schrodinger_equation}
with quadratic Hamiltonian $H(z) = -\frac{1}{2}\langle JX z, z\rangle$ for some $X\in \mathfrak{sp}(2n,\mathbb{R})$. Suppose at time $t=0$ and time $t=T$, the solution $u$ satisfies the decay conditions 
\begin{align*}
    |u(x,0)|\leq K e^{-\alpha |x|^2} \ \ \text{and} \ \ |u(x,T)|\leq K e^{-\beta |x|^2}
\end{align*}
for some constants $\alpha, \beta, K>0$. If the exponential
\begin{align*}
    \exp(TX) = \begin{pmatrix}\cdot & \mathcal{B}(T) \\ \cdot & \cdot \end{pmatrix} \ \ \text{is free symplectic and } \ \ (2\hbar)^2\|\mathcal{B}(T)\|_{\textnormal{op}}^{2}\ \alpha\beta >1,
\end{align*}
then $u\equiv 0$.
\end{theorem}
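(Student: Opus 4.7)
The plan is to reduce everything to Corollary \ref{corollary_Hardys_UP_Wigner_distribution} by transporting the two decay hypotheses into a single pair of bounds on the Wigner distribution of a suitably metaplectic-conjugated initial datum. Two facts will be used throughout: the symplectic covariance of the Wigner distribution, $\pazocal{W}^{\hbar}(\widehat{S}f) = \pazocal{W}^{\hbar}f\circ S^{-1}$ for any metaplectic $\widehat{S}\in\mathrm{Mp}(2n,\mathbb{R})$ projecting to $S\in\mathrm{Sp}(2n,\mathbb{R})$; and the elementary estimate that $|f(x)|\leq Ke^{-\alpha|x|^{2}}$ implies $|\pazocal{W}^{\hbar}f(x,\xi)|\leq C e^{-2\alpha|x|^{2}}$ \emph{uniformly in} $\xi$, obtained by inserting $|x+y/2|^{2}+|x-y/2|^{2}=2|x|^{2}+|y|^{2}/2$ into the defining integral.

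First I would write $S := \exp(TX) = \bigl(\begin{smallmatrix}\mathcal{A} & \mathcal{B}\\ \mathcal{C} & \mathcal{D}\end{smallmatrix}\bigr)$, with $\mathcal{B}=\mathcal{B}(T)$ invertible by the free hypothesis. By Corollary \ref{corollary_quadractic_Hamiltonian_solution_quadratic_Fourier_transform}, $u(\cdot,T) = \widehat{S}^{H}_{T}u_{0}$, so combining the two facts above applied at $t=0$ and $t=T$ produces the simultaneous bounds
\begin{align*}
    |\pazocal{W}^{\hbar}u_{0}(x,\xi)|\leq C_{1}e^{-2\alpha|x|^{2}} \quad\text{and}\quad |\pazocal{W}^{\hbar}u_{0}(x,\xi)|\leq C_{2}e^{-2\beta|\mathcal{A}x+\mathcal{B}\xi|^{2}}.
\end{align*}
The $(x,\xi)$-mixing in the second bound is removed by a symplectic shear. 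Because $\mathcal{A}\mathcal{B}^{T}$ is symmetric by \eqref{symplectic_block_form_conditions}, so is $\mathcal{B}^{-1}\mathcal{A}=\mathcal{A}^{T}\mathcal{B}^{-T}$, hence $R := \bigl(\begin{smallmatrix} I & 0 \\ \mathcal{B}^{-1}\mathcal{A} & I\end{smallmatrix}\bigr)\in\mathrm{Sp}(2n,\mathbb{R})$. Choosing any metaplectic lift $\widehat{R}$ and setting $v:=\widehat{R}u_{0}$, the identity $R^{-1}(x,\xi)=(x,-\mathcal{B}^{-1}\mathcal{A}x+\xi)$ combined with covariance yields the cleaner pair
\begin{align*}
    |\pazocal{W}^{\hbar}v(x,\xi)|\leq C_{1}e^{-2\alpha|x|^{2}} \quad\text{and}\quad |\pazocal{W}^{\hbar}v(x,\xi)|\leq C_{2}e^{-2\beta|\mathcal{B}\xi|^{2}}.
\end{align*}

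Next I would orthogonally diagonalize $\mathcal{B}^{T}\mathcal{B} = O^{T}DO$ with $D=\mathrm{diag}(\sigma_{1}^{2},\dots,\sigma_{n}^{2})$ the squared singular values of $\mathcal{B}$. The matrix $\tilde{R}:=O\oplus O$ is symplectic (since $O$ is orthogonal), and for any metaplectic lift $\widehat{\tilde{R}}$ the function $w:=\widehat{\tilde{R}}v$ satisfies
\begin{align*}
    |\pazocal{W}^{\hbar}w(x,\xi)|\leq C_{1}e^{-\sum_{j}2\alpha\, x_{j}^{2}} \quad\text{and}\quad |\pazocal{W}^{\hbar}w(x,\xi)|\leq C_{2}e^{-\sum_{j}2\beta\sigma_{j}^{2}\,\xi_{j}^{2}},
\end{align*}
which is precisely the diagonal form required by Corollary \ref{corollary_Hardys_UP_Wigner_distribution}. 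Picking $j^{\ast}$ with $\sigma_{j^{\ast}} = \|\mathcal{B}\|_{\textnormal{op}}$, the hypothesis $(2\hbar)^{2}\|\mathcal{B}\|_{\textnormal{op}}^{2}\alpha\beta>1$ becomes exactly $(2\alpha)(2\beta\sigma_{j^{\ast}}^{2})>\hbar^{-2}$, so the corollary forces $w\equiv 0$; unitarity of $\widehat{R}$ and $\widehat{\tilde{R}}$ then propagates back to $u_{0}\equiv 0$ and finally to $u\equiv 0$ via Corollary \ref{corollary_quadractic_Hamiltonian_solution_quadratic_Fourier_transform}.

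The step I expect to carry the real content is the construction of the shear $R$: without the symplectic identity $\mathcal{A}\mathcal{B}^{T}=\mathcal{B}\mathcal{A}^{T}$, the matrix $\mathcal{B}^{-1}\mathcal{A}$ would not be symmetric, $R$ would fail to be symplectic, the Wigner covariance step would collapse, and the coupled decay $|\mathcal{A}x+\mathcal{B}\xi|^{2}$ could not be cleanly separated into $\xi$-only decay with constant $2\beta\|\mathcal{B}\|_{\textnormal{op}}^{2}$ (the upper bound $|\mathcal{B}\xi|^{2}\leq\|\mathcal{B}\|_{\textnormal{op}}^{2}|\xi|^{2}$ is the wrong way to combine, which is why one must pick the maximising singular direction via $\tilde{R}$). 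Two minor technical points are worth flagging: neither $R$ nor $\tilde{R}$ is free symplectic (both $B$-blocks vanish), so their metaplectic lifts cannot be exhibited directly as quadratic Fourier transforms \eqref{quadratic_Fourier_transform} — but every symplectic matrix is a product of two free ones, so the lift exists, and since only unitarity and covariance of $|\pazocal{W}^{\hbar}|$ enter, the sign ambiguity of the lift is harmless.
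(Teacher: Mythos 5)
Your proposal is correct and follows essentially the same route as the paper: the proof of Lemma \ref{lemma_Hardys_UP_Wigner_transform_of_initial_condition} performs exactly your shear step by factoring $u_0(x) = e^{\frac{i}{\hbar}\langle Mx,x\rangle}f(x)$ with $M+M^{T}$ chosen to kill the off-diagonal block (the chirp being precisely the metaplectic lift of your $R$), arriving at the same pair of bounds $e^{-2\alpha|x|^{2}}$ and $e^{-2\beta|B^{T}\omega|^{2}}$ before invoking Corollary \ref{corollary_Hardys_UP_Wigner_distribution}. Your extra orthogonal rotation diagonalizing $\mathcal{B}^{T}\mathcal{B}$ and the selection of the maximizing singular direction only spell out a step the paper leaves implicit when matching the non-diagonal $\xi$-decay to the diagonal hypotheses of that corollary.
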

The proof is based on Corollary \ref{corollary_quadractic_Hamiltonian_solution_quadratic_Fourier_transform}, where the solution of the Schr\"{o}dinger equation can be written as a metaplectic transform of the initial condition. On this form, the proof is divided into two lemmas. The first lemma, Lemma \ref{lemma_Wigner_distribution_coordinate_transform_free_symplectic_group}, is well-known and referred to as the \textit{covariance property} of the Wigner distribution, where the Wigner distribution composed with a \textit{metaplectic} transformation corresponds to the Wigner distribution with an associated \textit{symplectic} coordinate transformation. This shows that the Wigner distribution of the solution equals, up the a symplectic coordinate transform, the Wigner distribution of the initial condition. In the second lemma, Lemma \ref{lemma_Hardys_UP_Wigner_transform_of_initial_condition}, we combine this fact with Hardy's uncertainty principle for the Wigner distribution.

\begin{lemma}
\label{lemma_Wigner_distribution_coordinate_transform_free_symplectic_group}
\textnormal{(Covariance property; see, e.g., Corollary 217 in \cite{deGosson_Symplectic_Methods})}
Let $\widehat{S}\in \mathrm{Mp}(2n,\mathbb{R})$, and let $S=\pi^{\mathrm{Mp}}(\widehat{S})$ denote the projection of $\widehat{S}$ on $\mathrm{Sp}(2n,\mathbb{R})$. Then for any $u\in \pazocal{S}(\mathbb{R}^n)$, the Wigner distribution of $\widehat{S}u$ satisfies 
\begin{align}
    \pazocal{W}^{\hbar}(\widehat{S}u) (x,\xi) = \pazocal{W}^{\hbar} u(S^{-1}(x,\xi)).
    \label{Wigner_distribution_coordinate_transform_equation}
\end{align}
\end{lemma}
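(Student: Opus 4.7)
The plan is to reduce to generators and then verify the identity by a direct integral computation. Since every symplectic matrix factors as a product of two free symplectic matrices, every $\widehat{S}\in\mathrm{Mp}(2n,\mathbb{R})$ can be written as a composition $\widehat{S}=\widehat{S}_{W_1,m_1}\widehat{S}_{W_2,m_2}$ of two quadratic Fourier transforms of the form \eqref{quadratic_Fourier_transform}. Moreover, the covariance property \eqref{Wigner_distribution_coordinate_transform_equation} behaves well under composition: if it holds for $\widehat{S}_1,\widehat{S}_2$ with projections $S_1,S_2$, then
\begin{align*}
\pazocal{W}^{\hbar}(\widehat{S}_1\widehat{S}_2 u)(z)=\pazocal{W}^{\hbar}(\widehat{S}_2 u)(S_1^{-1}z)=\pazocal{W}^{\hbar}u(S_2^{-1}S_1^{-1}z)=\pazocal{W}^{\hbar}u((S_1 S_2)^{-1}z).
\end{align*}
It therefore suffices to verify \eqref{Wigner_distribution_coordinate_transform_equation} for a single quadratic Fourier transform $\widehat{S}_{W,m}$ with generating function $W$ as in \eqref{generating_function_starting_point}; the choice of branch $m$ contributes only an overall phase, which cancels in the sesquilinear Wigner distribution.

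Next I would substitute the definition of $\widehat{S}_{W,m}u$ into $\pazocal{W}^{\hbar}(\widehat{S}_{W,m}u)(x,\xi)$ to obtain a triple integral in the variables $(y,x',x'')$. The exponent contains the difference $W(x+y/2,x')-W(x-y/2,x'')$, which, using the symmetry of $P$ and $Q$, simplifies to
\begin{align*}
\langle Px,y\rangle-\langle L^{-1}x,x'-x''\rangle-\tfrac{1}{2}\langle L^{-1}y,x'+x''\rangle+\tfrac{1}{2}\langle Qx',x'\rangle-\tfrac{1}{2}\langle Qx'',x''\rangle.
\end{align*}
After the change of variables $(x',x'')\mapsto(u,v)$ with $u=(x'+x'')/2$, $v=x'-x''$ (unit Jacobian), the exponent becomes linear in $y$ with coefficient $Px-\xi-L^{-T}u$.

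The $y$-integration then produces a delta distribution forcing $u=L^{T}(Px-\xi)$, and the accompanying factor $|\det L|$ cancels the $|\det L|^{-1}$ coming from $|c|^2$. What remains is
\begin{align*}
(2\pi\hbar)^{-n}\int e^{-\frac{i}{\hbar}\langle \eta,v\rangle}u\!\left(u^{*}+\tfrac{v}{2}\right)\overline{u\!\left(u^{*}-\tfrac{v}{2}\right)}\,\mathrm{d}v,
\end{align*}
where $u^{*}=L^{T}(Px-\xi)$ and $\eta=L^{-1}x-QL^{T}(Px-\xi)$. Comparing with the inverse formula \eqref{symplectic_matrix_inverse} applied to $S_{W}$ in \eqref{free_sympletic_matrix_from_generating_function}, one checks that $(u^{*},\eta)$ is precisely $S_{W}^{-1}(x,\xi)$, which identifies the remaining integral as $\pazocal{W}^{\hbar}u(S_{W}^{-1}(x,\xi))$ and finishes the proof.

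The main obstacle I anticipate is bookkeeping: tracking constants through the threefold integration (the normalization of $\widehat{S}_{W,m}$, the delta-function Jacobian, and the Wigner prefactor) and, more importantly, recognizing the pair $(u^{*},\eta)$ as the two components of $S_{W}^{-1}(x,\xi)$. This last identification is the only nontrivial algebraic step and relies essentially on \eqref{symplectic_matrix_inverse} together with the symmetry of $P$ and $Q$; everything else is routine.
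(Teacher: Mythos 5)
Your proof is correct, and it is worth noting that the paper itself does not prove this lemma at all: it is stated as a known result with a pointer to Corollary 217 in de Gosson's book, so your argument supplies a self-contained proof where the paper delegates to the literature. Your route --- factor $\widehat{S}$ into (at most two) quadratic Fourier transforms, observe that covariance composes, and verify the identity for a single generator by expanding $W(x+y/2,x')-W(x-y/2,x'')$, integrating out $y$ to get a delta in $u=(x'+x'')/2$, and matching $(u^{*},\eta)=\big(L^{T}(Px-\xi),\,L^{-1}x-QL^{T}(Px-\xi)\big)$ against $S_{W}^{-1}(x,\xi)$ computed from \eqref{free_sympletic_matrix_from_generating_function} and \eqref{symplectic_matrix_inverse} --- checks out in every detail, including the cancellation of $|\det L|$ against $|c|^{2}$ and the dismissal of the branch factor $i^{m}$ (and of the residual $\pm$ sign from the double cover) by sesquilinearity. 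De Gosson's own proof is organized differently: he typically derives the covariance from the intertwining of the Heisenberg--Weyl (or Grossmann--Royer) operators, $\widehat{S}\,\widehat{T}(z)\,\widehat{S}^{-1}=\widehat{T}(Sz)$, together with the representation of $\pazocal{W}^{\hbar}u$ in terms of those operators; that version buys the more general symplectic covariance of the full Weyl calculus, whereas your computation is more elementary and needs nothing beyond the explicit integral formula \eqref{quadratic_Fourier_transform}. The only blemish is notational: you use $u$ both for the function and for the averaged integration variable $(x'+x'')/2$, so in a written-out version you should rename one of them.
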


Observe that by formula \eqref{symplectic_matrix_inverse}, the inverse of the free symplectic matrix $\exp(X) = \begin{psmallmatrix}\cdot & \mathcal{B}\\ \cdot & \cdot \end{psmallmatrix}$ is also free symplectic such that $\exp(X)^{-1} = \begin{psmallmatrix}\cdot  & -\mathcal{B}^{T}\\ \cdot & \cdot \end{psmallmatrix}$. Hence, Theorem \ref{theorem_Hardy_UP_quadratic_Hamiltonian} follows once we prove the next lemma. 

\begin{lemma}
\label{lemma_Hardys_UP_Wigner_transform_of_initial_condition}
Let $S^{-1}$ be a free symplectic matrix on the form 
\begin{align*}
    S^{-1} = \begin{pmatrix}A & B \\ C & D\end{pmatrix}, \ \ \det B\neq 0.
\end{align*}
Suppose that for two functions $u_0, u_1 \in \pazocal{S}(\mathbb{R}^{n})$ their Wigner distributions satisfy the identity
\begin{align*}
    \pazocal{W}^{\hbar}u_1(x,\xi) = \pazocal{W}^{\hbar}u_0\big(S^{-1}(x,\xi)\big). 
\end{align*}
Suppose further that the functions satisfy the decay conditions 
\begin{align*}
    |u_0(x)| \leq K e^{-\alpha |x|^{2}} \ \ \text{and} \ \ |u_{1}(x)| \leq K e^{-\beta |x|^{2}} \ \ \text{for} \ \ \alpha, \beta, K>0.
\end{align*}
If $(2\hbar)^2\alpha \beta\cdot \|B\|_{\textnormal{op}}^{2}>1$, then $u_0,u_1\equiv0$.
\begin{proof}
By the decay conditions on $u_0$ and $u_1$, we easily deduce that the associated Wigner distributions are bounded by
\begin{enumerate}[label =  \textnormal{(\roman*)},itemsep=0.4ex, before={\everymath{\displaystyle}}]
    \item $|\pazocal{W}^{\hbar}u_0(x,\xi)|\leq \kappa e^{-2\alpha |x|^2}$ and
    \item $|\pazocal{W}^{\hbar}u_1(x,\xi)|\leq \kappa e^{-2\beta |x|^2}$ for some constant $\kappa>0$.
\end{enumerate}
Similarly to the proof of Hardy's uncertainty principle for the free Schr\"{o}dinger equation in section 2.2 in \cite{Bertolin_Malinnikova_2021}, wherein the initial condition $u(x,0)$ is written as $u(x,0) = e^{ia|x|^{2}}f(x)$, we express
\begin{align*}
    u_{0}(x) = e^{\frac{i}{\hbar}\langle Mx, x\rangle} f(x) \ \ \text{for some} \ \ f\in \pazocal{S}(\mathbb{R}^n),
\end{align*}
and $M$ is some real $n\times n$ matrix to be decided.
Evidently, $|u_0(x)|=|f(x)|$, and we shall therefore prove the Hardy type estimate for $f$. On this form, the Wigner distribution of $u_0$ reads
\begin{align*}
    \pazocal{W}^{\hbar}u_0(x,\xi) = \pazocal{W}^{\hbar}f\left(x, \xi - (M+M^{T})x\right).
\end{align*}
Since the right-hand side of (i) is independent of $\xi$, we may set $\xi := \omega +(M+M^{T})x$ and maintain the same decay condition for $\pazocal{W}^{\hbar}f$ as for $\pazocal{W}^{\hbar}u_0$, namely 
\begin{align}
    |\pazocal{W}^{\hbar}f(x,\omega)| \leq \kappa e^{-2\alpha |x|^2}.
    \label{Wigner_distribution_f_decay_x_direction}
\end{align}
Similarly, we express the Wigner distribution of $u_1$ in terms of $f$, and utilizing the identity $\pazocal{W}^{\hbar}u_1(x,\xi) = \pazocal{W}^{\hbar}u_0(S^{-1}(x,\xi))$, we find that
\begin{align*}
    \pazocal{W}^{\hbar}u_1(x,\xi) = \pazocal{W}^{\hbar}f\big(Z(x,\xi)\big),
\end{align*}
where 
\begin{align*}
    Z = \begin{pmatrix}A & B\\ C-(M+M^{T})A & D - (M+M^{T})B\end{pmatrix}.
\end{align*}
We now choose the matrix $M$ such that $D - (M+M^{T})B = 0$, which is possible since $B$ is invertible. In particular, we have that
\begin{align*}
    M+M^{T} = DB^{-1},
\end{align*}
which shows that the matrix $DB^{-1}$ is symmetric, and in fact, we could have chosen $M$ to be symmetric. With this choice of $M$, the lower left block of $Z$ is given by $C-DB^{-1}A$. By the characterization of symplectic matrices \eqref{symplectic_block_form_conditions} and since $S^{-1}$ is \textit{free} symplectic, we may express $C$ in terms of $A,B$ and $D$, namely $C= (DB^{-1})^{T}A-(B^{-1})^{T}$. Thus, the lower left block simplifies to $-(B^{-1})^{T}$, and the matrix $Z$ in turn simplifies to
\begin{align*}
    Z = \begin{pmatrix}A & B\\ -(B^{-1})^{T} & 0\end{pmatrix},
\end{align*}
so that
\begin{align*}
    \pazocal{W}^{\hbar}u_1(x,\xi) = \pazocal{W}^{\hbar}f\left(Ax+B\xi, -(B^{-1})^{T}x\right).
\end{align*}
Again since the right-hand side of (ii) is independent of $\xi$, define $\xi := B^{-1}y-B^{-1}Ax$, so the decay condition reads
\begin{align*}
    |\pazocal{W}^{\hbar}f(y,-(B^{-1})^{T}x)|\leq \kappa e^{-2\beta |x|^2},
\end{align*}
or equivalently
\begin{align}
    |\pazocal{W}^{\hbar}f(y,\omega)|\leq \kappa e^{-2\beta |B^{T}\omega|^2}.
    \label{Wigner_distribution_f_decay_xi_direction}
\end{align}
By combining the two decay conditions \eqref{Wigner_distribution_f_decay_x_direction} and \eqref{Wigner_distribution_f_decay_xi_direction}, the statement now follows from Corollary \ref{corollary_Hardys_UP_Wigner_distribution}.
\end{proof}
\end{lemma}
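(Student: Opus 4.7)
The plan is to reduce matters to Corollary \ref{corollary_Hardys_UP_Wigner_distribution} applied to a chirp-modified version of $u_0$. First, from $|u_0(x)|\leq K e^{-\alpha|x|^2}$ and the elementary identity $|x+y/2|^2+|x-y/2|^2 = 2|x|^2 + |y|^2/2$ applied inside the Wigner integral, one extracts the $\xi$-uniform bound $|\pazocal{W}^{\hbar}u_0(x,\xi)|\leq \kappa e^{-2\alpha|x|^2}$, and likewise $|\pazocal{W}^{\hbar}u_1(x,\xi)|\leq \kappa e^{-2\beta|x|^2}$. The first bound already controls $\pazocal{W}^\hbar u_0$ in the $x$-direction; the real work is to convert the $u_1$-bound into $\xi$-decay for $\pazocal{W}^\hbar u_0$. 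Using formula \eqref{symplectic_matrix_inverse} one obtains $S = \begin{pmatrix} D^T & -B^T \\ -C^T & A^T \end{pmatrix}$, and substituting $(x,\xi)=S(y,\eta)$ in the hypothesis $\pazocal{W}^\hbar u_1(x,\xi) = \pazocal{W}^\hbar u_0(S^{-1}(x,\xi))$ yields $|\pazocal{W}^\hbar u_0(y,\eta)|\leq \kappa e^{-2\beta|D^T y - B^T \eta|^2}$, a Gaussian in $\eta$ centered along the hyperplane $B^T\eta = D^T y$ rather than at $\eta = 0$.

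The crux is to straighten this shear by a chirp ansatz $u_0(x) = e^{i\langle Mx,x\rangle/\hbar}f(x)$ with $M$ symmetric. Expanding the Wigner integral and using $M = M^T$ to telescope the phase to $e^{-i\langle \xi - 2Mx,\, y\rangle/\hbar}$ gives the identity $\pazocal{W}^\hbar u_0(x,\xi) = \pazocal{W}^\hbar f(x, \xi - 2Mx)$. Choosing $M = \tfrac{1}{2}DB^{-1}$ — symmetric precisely because the symplectic condition $B^T D = D^T B$ from \eqref{symplectic_block_form_conditions} applied to $S^{-1}$ makes $DB^{-1}$ symmetric — the linear shift cancels, and the transferred bounds become $|\pazocal{W}^\hbar f(y,\omega)|\leq \kappa e^{-2\alpha|y|^2}$ together with $|\pazocal{W}^\hbar f(y,\omega)|\leq \kappa e^{-2\beta|B^T\omega|^2}$. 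Since $|f| = |u_0|$, showing $f\equiv 0$ suffices.

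Finally, Corollary \ref{corollary_Hardys_UP_Wigner_distribution} is formulated in canonical coordinates, so to recast $|B^T\omega|^2 = \langle BB^T\omega,\omega\rangle$ as a coordinatewise sum I would exploit the metaplectic covariance under a simultaneous orthogonal rotation of $y$ and $\omega$ that diagonalizes $BB^T$; the isotropic $y$-bound is unaffected, while in the new frame $|B^T\omega|^2 = \sum_j \lambda_j|\omega_j|^2$ with $\lambda_{\max} = \|B\|_{\textnormal{op}}^2$. Picking the index $j$ with $\lambda_j = \|B\|_{\textnormal{op}}^2$ yields $\alpha_j\beta_j = 4\alpha\beta\|B\|_{\textnormal{op}}^2$, which is exactly $> \hbar^{-2}$ under the hypothesis $(2\hbar)^2\alpha\beta\|B\|_{\textnormal{op}}^2 > 1$, so the corollary forces $f \equiv 0$ and hence $u_0, u_1 \equiv 0$. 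The main obstacle, as far as I can see, is the chirp step: locating the symmetric $M$ that absorbs the shear — and recognising that its symmetry is encoded in the symplectic relations on $B^T D$ — is the algebraic move that turns an awkward non-central Gaussian into the separable form the corollary needs.
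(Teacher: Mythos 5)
Your proposal is correct and follows essentially the same route as the paper: the chirp ansatz $u_0=e^{\frac{i}{\hbar}\langle Mx,x\rangle}f$ with $2M=DB^{-1}$ (symmetric by the symplectic relation $B^TD=D^TB$), the transfer of the two Gaussian bounds to $\pazocal{W}^{\hbar}f$ in the $x$- and $\xi$-directions, and the appeal to Corollary \ref{corollary_Hardys_UP_Wigner_distribution}. Your closing step — rotating by an orthogonal matrix diagonalizing $BB^{T}$ via metaplectic covariance so that $|B^{T}\omega|^2$ becomes the coordinatewise sum the corollary requires — is a detail the paper leaves implicit, and it is a genuinely needed one, so it is good that you made it explicit.
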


\section{Examples of Schr\"{o}dinger evolutions}
\label{section_examples_of_Schrodinger_evolutions}
In this section we provide explicit examples of quadratic Hamiltonians and what the associated Hardy type estimate of Theorem \ref{theorem_Hardy_UP_quadratic_Hamiltonian} look like. 

\subsection{Free Schr\"{o}dinger equation and harmonic oscillator}
To begin with, we consider two cases where there are known Hardy type estimates. Namely, we consider the infamous free particle case and also a generalized harmonic oscillator, which has also been studied by Cassano and Fanelli in \cite{Cassano_Fanelli_2017} for the special case where all angular frequencies are equal. 

\begin{example}
(Free Schr\"{o}dinger equation) Consider a system without any external potential, that is, consider the Hamiltonian $H$ of the form
\begin{align*}
    H(z) = \frac{1}{2m}|p|^2 = \frac{1}{2m}\left(p_1^2+\dots+p_n^2\right), \ \ \text{where} \ \ z=(x,p). 
\end{align*}
This Hamiltonian corresponds to the so-called \textit{free} Schr\"{o}dinger equation
\begin{align}
    i\hbar \frac{\partial u}{\partial t}(x,t) = -\frac{\hbar^2}{2m}\Delta u(x,t). 
    \label{free_Schrodinger_equation}
\end{align}
Expressing the Hamiltonian instead as the inner product $H(z) = \frac{1}{2}\langle M z, z\rangle$, we have that
\begin{align*}
    M = \begin{pmatrix}0 & 0 \\ 0 & \frac{1}{m}I \end{pmatrix}, \ \ \text{and consequently} \ \ X:= J M = \begin{pmatrix}0 & \frac{1}{m}I\\ 0 & 0\end{pmatrix}\in \mathfrak{sp}(2n,\mathbb{R}). 
\end{align*}
Since $X^2 =0$, the exponential $\exp(tX)$ reduces to
\begin{align*}
    \exp(tX) = I + tX = \begin{pmatrix}I & \frac{t}{m}I\\ 0 & I\end{pmatrix},  
\end{align*}
which is free symplectic for all $t>0$. 
Hence, by Theorem \ref{theorem_Hardy_UP_quadratic_Hamiltonian}, we obtain the following statement for the free Schr\"{o}dinger equation: 
\begin{corollary}
Let $u(\cdot,t)\in \pazocal{S}(\mathbb{R}^{n})$ be the solution of the free Schr\"{o}dinger equation \eqref{free_Schrodinger_equation}.
Suppose at time $t=0$ and time $t=T$, the solution $u$ satisfies the decay conditions
\begin{align*}
    |u(x,0)|\leq K e^{-\alpha |x|^2} \ \ \text{and} \ \ |u(x,T)| \leq K e^{-\beta |x|^2} 
\end{align*}
for some constants $\alpha, \beta, K>0$. If $\alpha \beta \left(\frac{2\hbar T}{m}\right)^2>1$, then $u\equiv 0$.
\end{corollary}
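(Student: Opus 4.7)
The plan is to invoke Theorem \ref{theorem_Hardy_UP_quadratic_Hamiltonian} with the quadratic Hamiltonian $H(z)=\tfrac{1}{2m}|p|^2$, using the preliminary calculations already carried out in the surrounding example. These calculations identify the symplectic algebra element $X=JM$ and establish the closed form
\[
    \exp(TX)=\begin{pmatrix}I & \tfrac{T}{m}I\\ 0 & I\end{pmatrix},
\]
so the upper right block is $\mathcal{B}(T)=\tfrac{T}{m}I$. I would then note that $\det \mathcal{B}(T)=(T/m)^n\neq 0$ for every $T>0$, verifying that $\exp(TX)$ is free symplectic and hence that the structural hypothesis of Theorem \ref{theorem_Hardy_UP_quadratic_Hamiltonian} is satisfied.

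Next I would read off the operator norm $\|\mathcal{B}(T)\|_{\mathrm{op}}=T/m$, which is immediate since $\mathcal{B}(T)$ is a positive scalar multiple of the identity. Substituting this value into the quantitative hypothesis $(2\hbar)^2\|\mathcal{B}(T)\|_{\mathrm{op}}^{2}\alpha\beta>1$ of Theorem \ref{theorem_Hardy_UP_quadratic_Hamiltonian} produces precisely $\alpha\beta\bigl(\tfrac{2\hbar T}{m}\bigr)^2>1$, which is the assumption of the corollary. Theorem \ref{theorem_Hardy_UP_quadratic_Hamiltonian} then yields $u\equiv 0$, completing the argument.

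There is no real obstacle: the corollary is essentially a direct specialization of Theorem \ref{theorem_Hardy_UP_quadratic_Hamiltonian} to the free Hamiltonian, with all nontrivial content already encoded in the covariance property (Lemma \ref{lemma_Wigner_distribution_coordinate_transform_free_symplectic_group}) and the Wigner-side Hardy estimate (Lemma \ref{lemma_Hardys_UP_Wigner_transform_of_initial_condition}). The only items to check are the free symplectic condition on $\exp(TX)$ and the computation of $\|\mathcal{B}(T)\|_{\mathrm{op}}$, both trivial in this case.
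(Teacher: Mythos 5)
Your proposal is correct and follows exactly the paper's route: the example preceding the corollary computes $X=JM$, observes $X^2=0$ so that $\exp(TX)=I+TX$ with $\mathcal{B}(T)=\tfrac{T}{m}I$, checks the free symplectic condition, and reads off $\|\mathcal{B}(T)\|_{\mathrm{op}}=T/m$ before applying Theorem \ref{theorem_Hardy_UP_quadratic_Hamiltonian}. There is nothing to add.
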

\begin{remark}
By comparison with the already established Hardy's uncertainty principle for the free case, with $m=\frac{1}{2}$ and $\hbar =1$ (see Theorem 3 in \cite{Bertolin_Malinnikova_2021}), we have rediscovered the \textit{same} condition on the exponents $\alpha, \beta$. Furthermore, as the literature shows, our Hardy type estimate is, in fact, sharp. 
\end{remark}
\end{example}

\begin{example}
(Harmonic Oscillator) Consider now the Hamiltonian given by 
\begin{align*}
    H(z) = \frac{1}{2m}\left(p_1^2+\dots + p_n^2\right)+\frac{m}{2}\left(\omega_1^2 x_1^2+\dots + \omega_n^2 x_n^2\right), \ \ \text{where} \ \ z=(x,p). 
\end{align*}
This is known as the \textit{harmonic oscillator}, and the associated Schr\"{o}dinger equation reads
\begin{align}
    i\hbar \frac{\partial u}{\partial t}(x,t) = \left(-\frac{\hbar^2}{2m}\Delta + \frac{m}{2}\left(\omega_1^2 x_1^2+\dots + \omega_n^2 x_n^2\right)\right)u(x,t).
    \label{Schrodinger_equation_Harmonic_oscillator}
\end{align}
Define for simplicity the diagonal matrix $\Omega := \mathrm{diag}(\omega_j)$. Thus, on the inner product form $H(z)=\frac{1}{2}\langle Mz, z\rangle$, the matrix $M$ can be written
\begin{align*}
    &M = \begin{pmatrix} m\Omega^2 & 0 \\ 0 & \frac{1}{m}I \end{pmatrix}, \ \ \text{and also} \ \ X:=JM = \begin{pmatrix}0 & \frac{1}{m}I\\ -m\Omega^2 & 0\end{pmatrix} \in \mathfrak{sp}(2n,\mathbb{R}). 
\intertext{For the power series $\exp(tX) = \sum_{k=0}^{\infty} \frac{t^k}{k!}X^k$, we distinguish between the matrices with even and odd exponents, which, by induction, are given by}
    &X^{2k} = (-1)^{k}\begin{pmatrix} \Omega^{2k} & 0\\ 0 & \Omega^{2k}\end{pmatrix} \ \ \text{and} \ \ X^{2k+1} =(-1)^{k}\begin{pmatrix}0 & \frac{1}{m}\Omega^{2k}\\ -m \Omega^{2(k+1)}\end{pmatrix}.
\intertext{Thus, the summation reads} 
    &\exp(tX) = \sum_{k=0}^{\infty}\frac{(-1)^{k}t^{2k}}{(2k)!}\begin{pmatrix}\Omega^{2k} & 0\\ 0 & \Omega^{2k}\end{pmatrix} + \sum_{k=0}^{\infty}\frac{(-1)^{k}t^{2k+1}}{(2k+1)!}\begin{pmatrix}0 & \frac{1}{m}\Omega^{2k}\\ -m\Omega^{2(k+1)} & 0\end{pmatrix}.
\end{align*}
Since $\Omega$ is diagonal, we may move the summation inside the matrix, and we find that each block in $\exp(tX) = \begin{psmallmatrix}A(t) & B(t)\\ C(t) & D(t)\end{psmallmatrix}$ is rather easy to compute. In particular, for the $B(t)$-block, we have that
\begin{align*}
    B(t) = \frac{1}{m}\sum_{k=0}^{\infty}\frac{(-1)^{k}t^{2k+1}}{(2k+1)!}\Omega^{2k} = \frac{1}{m}\mathrm{diag}\left(\frac{1}{\omega_j}\sum_{k=0}^{\infty} \frac{(-1)^{k}(\omega_j t)^{2k+1}}{(2k+1)!}\right) = \frac{1}{m}\mathrm{diag}\left(\frac{\sin(\omega_j t)}{\omega_j}\right). 
\end{align*}
Similarly, for each of the other blocks, we recognize the Taylor series for the sine and cosine function, so that
\begin{align*}
    A(t) = D(t) = \mathrm{diag}\big(\cos(\omega_j t)\big) \ \ \text{and} \ \ C(t) = -m\ \mathrm{diag}\big(\omega_j\sin(\omega_j t)\big).
\end{align*}
Nonetheless, based solely on the $B(t)$-block and Theorem \ref{theorem_Hardy_UP_quadratic_Hamiltonian}, we obtain the following Hardy type estimate for the harmonic oscillator: 
\begin{corollary}
\label{corollary_Harmonic_oscillator_Hardy_UP}
Let $u(\cdot,t)\in\pazocal{S}(\mathbb{R}^{n})$ be the solution of the Schr\"{o}dinger equation \eqref{Schrodinger_equation_Harmonic_oscillator} corresponding to the harmonic oscillator. Suppose at time $t=0$ and time $t=T$, the solution $u$ satisfies the decay conditions
\begin{align*}
    |u(x,0)|\leq K e^{-\alpha |x|^2} \ \ \text{and} \ \ |u(x,T)| \leq K e^{-\beta|x|^{2}}
\end{align*}
for some constants $\alpha, \beta, K>0$. If 
\begin{align*}
    \sin(\omega_j T)\neq 0 \ \ \text{for all} \ \ j=1,2,\dots,n \ \ \text{and} \ \ \alpha\beta \left(\frac{2\hbar}{m}\right)^2\max{j} \left|\frac{\sin(\omega_j T)}{\omega_j}\right|^2>1, \ \ \text{then} \ \ u\equiv 0.
\end{align*}
\end{corollary}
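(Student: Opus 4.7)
The plan is to apply Theorem \ref{theorem_Hardy_UP_quadratic_Hamiltonian} directly, using the explicit computation of $\exp(TX)$ already obtained in the example. The two hypotheses to check are (i) that $\exp(TX)$ is a free symplectic matrix at $t=T$, and (ii) that the quantitative threshold $(2\hbar)^2 \|\mathcal{B}(T)\|_{\textnormal{op}}^2\,\alpha\beta > 1$ translates into the stated condition involving $\max_j |\sin(\omega_j T)/\omega_j|$.

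First I would recall that the upper-right block is $\mathcal{B}(T) = B(T) = \frac{1}{m}\,\mathrm{diag}\!\left(\sin(\omega_j T)/\omega_j\right)$. Being diagonal, $B(T)$ is invertible precisely when none of its diagonal entries vanish, which is exactly the requirement $\sin(\omega_j T)\neq 0$ for all $j=1,\dots,n$. Under this assumption $\det B(T)\neq 0$, and therefore $\exp(TX)$ is free symplectic, so Theorem \ref{theorem_Hardy_UP_quadratic_Hamiltonian} is applicable.

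Next I would compute the operator norm. Since $B(T)$ is a real diagonal matrix, its operator norm on $\mathbb{R}^n$ equals the largest absolute value among its diagonal entries, giving $\|B(T)\|_{\textnormal{op}} = \frac{1}{m}\max_j \left|\sin(\omega_j T)/\omega_j\right|$. Substituting this into the criterion of Theorem \ref{theorem_Hardy_UP_quadratic_Hamiltonian} yields
\begin{align*}
    (2\hbar)^2 \|B(T)\|_{\textnormal{op}}^2\,\alpha\beta = \alpha\beta\left(\frac{2\hbar}{m}\right)^2 \max_j \left|\frac{\sin(\omega_j T)}{\omega_j}\right|^2,
\end{align*}
so the strict inequality assumed in the corollary is exactly the hypothesis of the theorem, forcing $u\equiv 0$.

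There is no real obstacle here: the proof is essentially a verification that the explicitly computed block $B(T)$ satisfies the two conditions of the general theorem. The only mild subtlety worth stating clearly is that the diagonal structure of $\Omega$ (and hence of $B(T)$) is what allows both the invertibility criterion and the operator-norm computation to reduce to componentwise statements on the frequencies $\omega_j$; without diagonal structure one would instead need to estimate the operator norm of a more complicated matrix.
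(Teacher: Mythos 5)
Your proposal is correct and follows exactly the paper's route: the corollary is obtained by reading off the diagonal block $\mathcal{B}(T)=\frac{1}{m}\mathrm{diag}\big(\sin(\omega_j T)/\omega_j\big)$ from the computed exponential, noting that its invertibility is equivalent to $\sin(\omega_j T)\neq 0$ for all $j$ and that its operator norm is the maximal diagonal entry in absolute value, and then invoking Theorem \ref{theorem_Hardy_UP_quadratic_Hamiltonian}. Nothing is missing.
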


\begin{remark}
Although we might presume that $\omega_j^2>0$, this is not actually a requirement. In Theorem \ref{theorem_Hardy_UP_quadratic_Hamiltonian}, we only require the matrix $M$ in $H(z) = \frac{1}{2}\langle Mz, z\rangle$ to be real-valued symmetric, which does not change if one or several of the $\omega_j^2<0$. The statement of Corollary \ref{corollary_Harmonic_oscillator_Hardy_UP} is not affected if $\omega_j^2<0$, other than, as a matter of preference, we choose to express the complex sine in terms of the real hyperbolic sine function, namely, $|\sin(i|\omega_j| T)| = |\sinh(|\omega_j|T)|$.
\end{remark}

\begin{remark}
In contrast to the free Schr\"{o}dinger equation, for the harmonic oscillator there are times $T>0$ where our procedure yields no Hardy type estimate. These are \textit{exactly} the time points when $\exp(TX)$ is no longer free symplectic, i.e., the time points when $\sin(\omega_j T)=0$. Since $\sinh(|\omega_j|T)\neq 0$ for all $T>0$, our Hardy type estimate only breaks down when $\omega_j>0$ for the discrete time points $T=\frac{k\pi}{\omega_j}$ for $k\in\mathbb{N}$. If $\omega_j=\omega$ for all $j$, the solution is \textit{periodic} in time, and these time points represent \textit{periods} or \textit{half periods} of the solution, for which we do not expect any Hardy type estimate to be present, as we are essentially trying to extract information from a a single time point. However, if $\omega_j-\omega_k \notin \mathbb{Q}$ for some pair $(\omega_j, \omega_k)$, the solution is not even periodic in time, and we do not have such a nice interpretation of why our Hardy estimate breaks down.   
\end{remark}

\begin{remark}
As previously mentioned, Hardy's uncertainty principle for the harmonic oscillator has also been studied by Cassano and Fanelli in \cite{Cassano_Fanelli_2017}, and with $m=\frac{1}{2}$, $\hbar = 1$ and $\omega_j^2 = \omega^2>0$ for $j=1,\dots,n$ they present a \textit{sharp} condition on the exponents $\alpha, \beta$ (see Theorem 1.3 and Theorem 1.9). By comparison with the above corollary, we again find that the condition coincides. 
\end{remark}
\end{example}

\subsection{Systems based on positive definite matrices}
From the previous two examples, it should be evident that providing an explicit Hardy type estimate based on Theorem \ref{theorem_Hardy_UP_quadratic_Hamiltonian} for a specific Hamiltonian $H(z) = \frac{1}{2}\langle Mz, z\rangle$ is contingent upon our ability to compute the associated exponential, $\exp(tJM)$. For the free and harmonic oscillator case, this is rather simple since $M$, in both cases, is a diagonal matrix. In general, this might be quite a challenging computation. However, for the family of positive definite matrices, we may apply Williamson's diagonalization theorem to simplify our computations. 

\begin{theorem}
\label{thm_Willamsons_diagonalization}
\textnormal{(Williamson's diagonalization theorem; see, e.g., Theorem 93 in \cite{deGosson_Symplectic_Methods})} Let $M$ be a positive definite symmetric real $2n\times 2n$-matrix. 
The eigenvalues of $JM$ are all of the form $\pm \lambda_j$ for $\lambda_j>0$, and the associated eigenvectors can be written $(e_j\pm i f_j)$ so that $\{e_j, f_j\}_{j=1}^{n}$ forms a symplectic basis. 
The matrix 
\begin{align}
    S := \left( e_1|\dots|e_n \ \Big| \ f_1|\dots|f_n\right)\in \mathrm{Sp}(2n,\mathbb{R})
\end{align} 
then diagonalizes $M$ such that
\begin{align}
    S^T M S 
    = \begin{pmatrix}
    \Lambda & 0\\
    0 & \Lambda
    \end{pmatrix} \ \ \text{for} \ \ \Lambda = \mathrm{diag}(\lambda_j).
\end{align}
\end{theorem}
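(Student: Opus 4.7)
The strategy is to reduce the problem to the real spectral theorem for skew-symmetric matrices via the positive definite symmetric square root $L := M^{1/2}$. The first observation is that $K := LJL$ is real and skew-symmetric, since $K^T = L^T J^T L^T = -LJL$, and invertible since $M$ is. Thus every eigenvalue of $K$ is purely imaginary and nonzero, occurring in conjugate pairs $\pm i\lambda_j$ with $\lambda_j > 0$, and the real normal-form theorem for skew-symmetric matrices supplies an orthogonal matrix $O$ with
\begin{align*}
    O^T K O \;=\; J_\Lambda \;:=\; \begin{pmatrix} 0 & \Lambda \\ -\Lambda & 0 \end{pmatrix}, \qquad \Lambda := \mathrm{diag}(\lambda_1,\dots,\lambda_n).
\end{align*}

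The ansatz is then $S := L^{-1} O D$ with $D := \mathrm{diag}(\Lambda^{1/2}, \Lambda^{1/2})$. The diagonalization identity is a one-line computation using $M = L^2$ and $O^T O = I$:
\begin{align*}
    S^T M S \;=\; D\, O^T L^{-1} \cdot L^2 \cdot L^{-1} O D \;=\; D^2 \;=\; \mathrm{diag}(\Lambda, \Lambda).
\end{align*}
To verify that $S$ is symplectic, I would invert both sides of $LJL = O J_\Lambda O^T$ and use $J_\Lambda^{-1} = -J_{\Lambda^{-1}}$ (immediate from $J_\Lambda^2 = -\mathrm{diag}(\Lambda^2,\Lambda^2)$) to obtain $L^{-1} J L^{-1} = O J_{\Lambda^{-1}} O^T$; then
\begin{align*}
    S^T J S \;=\; D\, O^T (L^{-1} J L^{-1}) O\, D \;=\; D\, J_{\Lambda^{-1}}\, D \;=\; J,
\end{align*}
the last equality being a direct $2 \times 2$ block computation.

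For the spectral part of the statement, the key identity is $JM = L^{-1}(LJL)L = L^{-1} K L$, so $JM$ is similar to $K$ and shares its eigenvalues $\pm i\lambda_j$. Writing the columns of $O$ as $o_1,\dots,o_{2n}$, the identity $O^T K O = J_\Lambda$ unpacks to $K o_j = -\lambda_j o_{n+j}$ and $K o_{n+j} = \lambda_j o_j$, so $o_j \pm i o_{n+j}$ is a $K$-eigenvector with eigenvalue $\pm i\lambda_j$; applying $L^{-1}$ yields, up to a positive scalar, the eigenvectors $e_j \pm i f_j$ of $JM$ stated in the theorem, where $e_j$ and $f_j$ are the first $n$ and the last $n$ columns of $S$. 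That $\{e_j, f_j\}_{j=1}^n$ is a symplectic basis is automatic once $S \in \mathrm{Sp}(2n,\mathbb{R})$. The main obstacle I foresee is not any single step but the bookkeeping between the three closely related matrices $J$, $J_\Lambda$, $J_{\Lambda^{-1}}$; a route that avoids inverting skew-symmetric matrices is to instead verify that $S^{-1} = D^{-1} O^T L$ is symplectic, which only requires the forward identity $LJL = OJ_\Lambda O^T$.
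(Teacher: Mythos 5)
Your proof is correct. Note that the paper does not actually prove this theorem---it is quoted from de Gosson (Theorem~93 of the cited reference)---and your route via the symmetric square root $L=M^{1/2}$ and the orthogonal normal form of the skew-symmetric matrix $K=LJL$ is essentially the standard argument given there, so all steps check out: $S^TMS=D^2$, $S^TJS=DJ_{\Lambda^{-1}}D=J$, the similarity $JM=L^{-1}KL$, and the identification of the columns of $S$ with positive multiples of the real and imaginary parts of the eigenvectors; the symplectic-basis claim is indeed automatic from $S\in\mathrm{Sp}(2n,\mathbb{R})$ under the paper's convention $\sigma(e_j,f_k)=-\delta_{jk}$. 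Two small bookkeeping remarks: the real normal-form theorem delivers $K$ as a direct sum of $2\times 2$ blocks $\begin{psmallmatrix}0&\lambda_j\\-\lambda_j&0\end{psmallmatrix}$, so to reach your $J_\Lambda$ you must also permute the basis (and possibly swap the two members of a pair to fix the sign of $\lambda_j$), both of which are orthogonal operations and hence harmless; and the eigenvalues you correctly obtain are $\pm i\lambda_j$, whereas the theorem as printed says $\pm\lambda_j$---a typo in the paper, as the analogous Lemma on the exponential decomposition states $\pm i\lambda_j$.
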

From here, we obtain a closed form of the exponential.
\begin{lemma}
\label{lemma_Exponential_symplectic_Lie_algebra_positive_definite_matrix}
Let $M$ be a positive definite symmetric real $2n\times 2n$ matrix, and define the matrix $X:=JM$. The eigenvalues of $X$ are on the form $\pm i\lambda_j$ for $\lambda_j>0$, and the associated eigenvectors can be written $(e_j\pm i f_j)$ so that $\{e_j, f_j\}_{j=1}^{n}$ forms a       symplectic basis. Then for the matrix 
\begin{align}
    S:=\left(e_1 | \dots| e_n \ \Big| \ f_1|\dots|f_n\right)\in \mathrm{Sp}(2n,\mathbb{R}),
    \label{Exponential_sympletic_Lie_algebra_diagonalization_matrix_construction}
\end{align}
we have the following matrix decomposition of the exponential
\begin{align}
    \exp(tX) = J (S^{-1})^{T} \begin{pmatrix} \Theta&  -\Omega \\ \Omega&  \Theta
    \end{pmatrix} S^{-1}
    \label{Exponential_symplectic_Lie_albegra_positive_definite_matrix_equation}
\end{align}
with $\Theta := \mathrm{diag}(\sin(\lambda_jt))$, $\Omega := \mathrm{diag}(\cos(\lambda_j t))$ and parameter $t\in \mathbb{R}$.
\begin{proof}
By the diagonalization in Theorem \ref{thm_Willamsons_diagonalization}, we can write 
\begin{align*}
    &X = J (S^{-1})^{T}\begin{pmatrix} \Lambda & 0\\ 0 & \Lambda \end{pmatrix}S^{-1} \ \ \text{for} \ \ \Lambda = \mathrm{diag}(\lambda_j). 
\intertext{By definition of symplectic matrices, $S^{-1} J (S^{-1})^{T} = J$, it follows that}
    &X^{j} = J (S^{-1})^{T}\begin{pmatrix} \Lambda & 0 \\ 0 & \Lambda \end{pmatrix} \left[J \begin{pmatrix} \Lambda & 0 \\ 0 & \Lambda \end{pmatrix}\right]^{j-1}S^{-1} \ \ \text{for} \ \ j =1,2,\dots
\end{align*}
In turn, the power series of $\exp(tX)$ reads 
\begin{align*}
    \exp(tX) &= \sum_{j=0}^{\infty}\frac{t^j X^j}{j!} = I + J(S^{-1})^{T}\left[\sum_{j=1}^{\infty} \frac{t^j}{j!} \begin{psmallmatrix} \Lambda & 0 \\ 0 & \Lambda \end{psmallmatrix} \left[J\begin{psmallmatrix} \Lambda & 0 \\ 0 & \Lambda \end{psmallmatrix}\right]^{j-1}\right]S^{-1}.
\end{align*}
We now compute the sum within the square brackets $[\dots]$, and similarly to the harmonic oscillator, we distinguish between even and odd exponents, where
\begin{align*}
    \left[J \begin{psmallmatrix} \Lambda & 0 \\ 0 & \Lambda \end{psmallmatrix}\right]^{2k} = (-1)^{n}\begin{psmallmatrix} \Lambda^{2k} & 0 \\ 0 &  \Lambda^{2k}\end{psmallmatrix} \ \ \text{and} \ \ \left[J \begin{psmallmatrix} \Lambda & 0 \\ 0 & \Lambda \end{psmallmatrix}\right]^{2k+1} = (-1)^{n}\begin{psmallmatrix} 0 & \Lambda^{2k+1} \\ -\Lambda^{2k+1} & \end{psmallmatrix}.
\end{align*}
Thus, the sum can be expressed
\begin{align*}
   &\sum_{j=1}^{\infty} \frac{t^j}{j!} \begin{psmallmatrix} \Lambda & 0 \\ 0 & \Lambda \end{psmallmatrix} \left[J\begin{psmallmatrix} \Lambda & 0 \\ 0 & \Lambda \end{psmallmatrix}\right]^{j-1}=\sum_{k=0}^{\infty}\frac{(-1)^{k} t^{2k+1}}{(2k+1)!}\begin{psmallmatrix} \Lambda^{2k+1} & 0 \\ 0 & \Lambda^{2k+1} \end{psmallmatrix} - \sum_{k=1}^{\infty} \frac{(-1)^{k}t^{2k}}{(2k)!}\begin{psmallmatrix} 0 & \Lambda^{2k} \\ -\Lambda^{2k} & 0 \end{psmallmatrix}.
\end{align*}
By pulling the summation inside the matrices, we immediately recognize the Taylor series of the sine and cosine function. In particular, with the same notation as in \eqref{Exponential_symplectic_Lie_albegra_positive_definite_matrix_equation}, we find that
\begin{align*}
   &\sum_{j=1}^{\infty} \frac{t^j}{j!} \begin{psmallmatrix} \Lambda & 0 \\ 0 & \Lambda \end{psmallmatrix} \left[J\begin{psmallmatrix} \Lambda & 0 \\ 0 & \Lambda \end{psmallmatrix}\right]^{j-1}=\begin{pmatrix} \Theta & 0 \\ 0 & \Theta \end{pmatrix} +J - \begin{pmatrix} 0 & \Omega \\ -\Omega & 0 \end{pmatrix}.
\end{align*}
Again since $S^{-1}$ is symplectic, we have that $J\big((S^{-1})^{T}J S^{-1}\big) = J^2 = -I$, and result \eqref{Exponential_symplectic_Lie_albegra_positive_definite_matrix_equation} follows. 
\end{proof}
\end{lemma}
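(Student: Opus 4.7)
My plan is to use Williamson's diagonalization (Theorem \ref{thm_Willamsons_diagonalization}) as the starting point: applied to the positive definite $M$, it produces a symplectic matrix $S$ of the form stated in the lemma such that $S^{T}MS = \mathrm{diag}(\Lambda,\Lambda)$ with $\Lambda=\mathrm{diag}(\lambda_j)$. Inverting and multiplying by $J$ gives
\begin{equation*}
    X \;=\; JM \;=\; J(S^{-1})^{T}\begin{pmatrix}\Lambda & 0 \\ 0 & \Lambda\end{pmatrix}S^{-1}.
\end{equation*}
The essential structural observation is that $S^{-1}$ is itself symplectic, so $S^{-1}J(S^{-1})^{T}=J$. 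Using this identity to collapse adjacent factors in products of $X$, a simple induction on $j\geq 1$ yields
\begin{equation*}
    X^{j} \;=\; J(S^{-1})^{T}\begin{pmatrix}\Lambda & 0 \\ 0 & \Lambda\end{pmatrix}\!\left[\,J\begin{pmatrix}\Lambda & 0 \\ 0 & \Lambda\end{pmatrix}\right]^{j-1}\! S^{-1}.
\end{equation*}

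With this closed form for $X^{j}$, I would substitute into the power series $\exp(tX)=I+\sum_{j\geq 1}(t^{j}/j!)\,X^{j}$ and pull the outer factors $J(S^{-1})^{T}$ and $S^{-1}$ out of the sum. Splitting according to the parity of $j-1$ and using the computation $\bigl(J\,\mathrm{diag}(\Lambda,\Lambda)\bigr)^{2}=-\mathrm{diag}(\Lambda^{2},\Lambda^{2})$ turns the odd-index contribution into a block-diagonal matrix whose entries are the Taylor series of $\sin(\lambda_{k}t)$, and the even-index contribution into an antidiagonal matrix whose entries are the Taylor series of $\cos(\lambda_{k}t)-1$. Recognising $\Theta=\mathrm{diag}(\sin(\lambda_{k}t))$ and $\Omega=\mathrm{diag}(\cos(\lambda_{k}t))$, a short rearrangement shows that the middle bracketed sum equals
\begin{equation*}
    \begin{pmatrix}\Theta & -\Omega \\ \Omega & \Theta\end{pmatrix}+J.
\end{equation*}

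The final, delicate step is to absorb the leftover $I$ coming from the $j=0$ term together with the stray $+J$ produced by the fact that the cosine Taylor series starts at $k=1$ rather than $k=0$. Here I would invoke symplecticity of $S^{-1}$ one more time: $(S^{-1})^{T}JS^{-1}=J$ gives $J(S^{-1})^{T}J S^{-1}=J^{2}=-I$, so the contribution of the extra $+J$ under the sandwich cancels exactly with the $I$ from the $j=0$ term, leaving precisely the claimed identity \eqref{Exponential_symplectic_Lie_albegra_positive_definite_matrix_equation}. I expect the main obstacle to be the careful bookkeeping of this last cancellation: the sine series naturally begins at $k=0$ while the cosine series begins at $k=1$, so one must keep track of a single constant term with the correct sign in order to land exactly on $\begin{psmallmatrix}\Theta & -\Omega \\ \Omega & \Theta\end{psmallmatrix}$ rather than on a matrix differing by a multiple of $J$.
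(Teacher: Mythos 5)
Your proposal is correct and follows essentially the same route as the paper's own proof: Williamson diagonalization, the inductive closed form for $X^{j}$ via $S^{-1}J(S^{-1})^{T}=J$, parity splitting of the power series into sine and cosine Taylor series, and the final cancellation of the stray $I+J(S^{-1})^{T}JS^{-1}=I-I=0$. The "delicate" bookkeeping you flag (the cosine series starting at $k=1$, producing the extra $+J$) is exactly the point the paper's proof handles the same way, so there is nothing to add.
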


The purpose of the third and final example is twofold: Firstly, it illustrates how we may apply Lemma \ref{lemma_Exponential_symplectic_Lie_algebra_positive_definite_matrix} to produce explicit Hardy type estimates based on Theorem \ref{theorem_Hardy_UP_quadratic_Hamiltonian}. Secondly, the example does not seem to be covered by previous literature. 

\begin{example}
(2D--harmonic oscillator with a cross-term) Consider the Hamiltonian 
\begin{align*}
    H(z) = \frac{1}{2m}\left(p_1^2+p_2^2\right) + \theta p_1 x_2 + \frac{m\omega^2}{2}\left(x_1^2+x_2^2\right) \ \ \text{where} \ \ z=(x,p)\in\mathbb{R}^{2}\times\mathbb{R}^{2},
\end{align*}
which corresponds to the Schr\"{o}dinger equation
\begin{align}
    i\hbar \frac{\partial u}{\partial t}(x,t) = \left(-\frac{\hbar^2}{2m}\Delta -i\hbar \ \theta x_2\frac{\partial}{\partial x_1}+ \frac{m\omega^2}{2}\left(x_1^2 + x_2^2\right)\right)u(x,t).
    \label{Schrodinger_equation_with_cross_term}
\end{align}
This system is similar to the harmonic oscillator but with an added cross-term $+\theta p_1 x_2$ in the Hamiltonian. With this additional term, the symmetric matrix $M$ in $H(z) = \frac{1}{2}\langle Mz, z\rangle$ is no longer a diagonal matrix, rather, we have that
\begin{align*}
    M = &\begin{pmatrix}m\omega^2 & 0 & 0 & 0\\ 0 & m\omega^2 & \theta & 0\\ 0 & \theta & \frac{1}{m} & 0\\ 0 & 0 & 0 & \frac{1}{m}\end{pmatrix},
    \intertext{in addition to}
    X:=JM = &\begin{pmatrix}0 & \theta & \frac{1}{m} & 0 \\ 0 & 0 & 0 & \frac{1}{m}\\ -m\omega^2 & 0 & 0 & 0\\ 0 & -m\omega^2 & -\theta & 0\end{pmatrix}\in \mathfrak{sp}(4,\mathbb{R}).
\end{align*}
Proceeding, we assume $\theta\in\mathbb{R}$ such that $\omega>|\theta|$, making $M$ a real positive definite matrix, to which we may apply Lemma \ref{lemma_Exponential_symplectic_Lie_algebra_positive_definite_matrix}. Consider first the following set of vectors in $\mathbb{R}^{4}$:
\begin{equation}
    \begin{aligned}
        &e_1 := \left(\frac{m}{2}\sqrt{\omega(\omega+\theta)}\right)^{\frac{1}{2}}\begin{pmatrix}-\frac{1}{m\omega}\\ 0 \\ 0 \\ 1\end{pmatrix}, \ \ 
    f_1 := \left(\frac{m}{2}\sqrt{\omega(\omega+\theta)}^{-1}\right)^{\frac{1}{2}}\begin{pmatrix}0\\ -\frac{1}{m} \\ -\omega \\ 0\end{pmatrix},\\
    &e_2 := \left(\frac{m}{2}\sqrt{\omega(\omega-\theta)}\right)^{\frac{1}{2}}\begin{pmatrix}-\frac{1}{m\omega}\\ 0 \\ 0\\ -1\end{pmatrix}, \ \ 
    f_2 := \left(\frac{m}{2}\sqrt{\omega(\omega-\theta)}^{-1}\right)^{\frac{1}{2}}\begin{pmatrix}0\\ \frac{1}{m}\\ -\omega \\ 0 \end{pmatrix}.
    \end{aligned}
    \label{eigenvectors_real_imaginary_part_example}
\end{equation}
It is straightforward to verify that $(e_j\pm i f_j)$ constitute eigenvectors of $X$ such that
\begin{align}
    X (e_j\pm i f_j) = \pm i \lambda_j (e_j\pm i f_j), \ \ \text{where} \ \ \lambda_1 = \sqrt{\omega(\omega+\theta)} \ \ \text{and} \ \ \lambda_2 = \sqrt{\omega(\omega-\theta)}.
    \label{eigenvalues_example}
\end{align}
By the normalization of the vectors, we also find that $\{e_j, f_j\}_{j=1}^{2}$ forms a symplectic basis for $\mathbb{R}^2\times \mathbb{R}^2$. Thus, we may use
\begin{align}
    S:= \big(e_1|e_1 | f_1 | f_2\big)\in \mathrm{Sp}(4,\mathbb{R})
    \label{decomposition_matrix_example}
\end{align}
to decompose the exponential, $\exp(tX)$, according to Lemma \ref{lemma_Exponential_symplectic_Lie_algebra_positive_definite_matrix} such that
\begin{align*}
    \exp(tX) = J (S^{-1})^{T}\begin{pmatrix}\Theta & -\Omega\\ \Omega & \Theta \end{pmatrix}S^{-1} \ \ \text{with} \ \ \Theta :=\mathrm{diag}(\sin(\lambda_j t)) \ \ \text{and} \ \ \Omega:=\mathrm{diag}(\cos(\lambda_j t)). 
\end{align*}
Writing $S$ on block form $S=\begin{psmallmatrix}A & B\\ C & D\end{psmallmatrix}$ and utilizing expression \eqref{symplectic_matrix_inverse} for the inverse $S^{-1}$, it follows that
\begin{align*}
     \mathcal{B}(t) = B\Theta B^{T} + B \Omega A^{T} -A\Omega B^{T} + A\Theta A^{T}, \ \ \text{where} \ \ \exp(t X) = \begin{pmatrix}\cdot & \mathcal{B}(t)\\ \cdot & \cdot \end{pmatrix}.
\end{align*}
By combining \eqref{eigenvectors_real_imaginary_part_example}--\eqref{decomposition_matrix_example}, the two upper blocks read
\begin{align*}
    A = -\begin{pmatrix}\sqrt{\frac{\lambda_1}{2m\omega^2}} & \sqrt{\frac{\lambda_2}{2m\omega^2}}\\ 0 & 0\end{pmatrix} \ \ \text{and} \ \ B= \begin{pmatrix}0 & 0\\ -\frac{1}{\sqrt{2m\lambda_1}} & \frac{1}{\sqrt{2m\lambda_2}}\end{pmatrix}.
\end{align*}
Subsequently, we obtain an \textit{explicit} expression for the upper right block of the exponential, $\exp(tX)$, namely
\begin{align}
    \mathcal{B}(t) = \frac{1}{2m\omega}\begin{pmatrix}\frac{\lambda_1}{\omega} \sin(\lambda_1 t) + \frac{\lambda_2}{\omega}\sin(\lambda_2 t) & \cos(\lambda_2 t)-\cos(\lambda_1 t) \\ \cos(\lambda_1 t)-\cos(\lambda_2 t) & \frac{\omega}{\lambda_1}\sin(\lambda_1 t)+\frac{\omega}{\lambda_2}\sin(\lambda_2 t)\end{pmatrix},
    \label{upper_right_block_matrix_explicit_example}
\end{align}
With this latest result, we are ready to compute for the Schr\"{o}dinger equation \eqref{Schrodinger_equation_with_cross_term} an \textit{explicit} Hardy type estimate based on Theorem \ref{theorem_Hardy_UP_quadratic_Hamiltonian}.
\begin{corollary}
Let $u(\cdot,t)\in\pazocal{S}(\mathbb{R}^{n})$ be the solution of the Schr\"{o}dinger equation \eqref{Schrodinger_equation_with_cross_term}, with $\theta\in\mathbb{R}$ such that $\omega>|\theta|$. Suppose at time $t=0$ and time $t=T$, the solution $u$ satisfies the decay conditions
\begin{align*}
    |u(x,0)|\leq K e^{-\alpha |x|^2} \ \ \text{and} \ \ |u(x,T)| \leq K e^{-\beta|x|^{2}}
\end{align*}
for some constants $\alpha, \beta, K>0$. Define the quantity 
\begin{equation}
\begin{aligned}
    &2(2m\omega)^2\cdot \Gamma_{\pm}(T):=\frac{\omega^4+\lambda_{1}^4}{\omega^2\lambda_{1}^2}\sin^2(\lambda_1 T)+\frac{\omega^4+\lambda_{2}^4}{\omega^2\lambda_{2}^2}\sin^2(\lambda_2 T)\\
    &+2\frac{\omega^4+(\lambda_1\lambda_2)^2}{(\omega\lambda_1\lambda_2)^2}\sin(\lambda_1 T)\sin(\lambda_2 T)+2\big(\cos(\lambda_1 T)-\cos(\lambda_2 T)\big)^2\\
    &\pm\left|\frac{\omega^2-\lambda_{1}^2}{\omega\lambda_{1}}\sin(\lambda_1 T)+\frac{\omega^2-\lambda_{2}^2}{\omega\lambda_{2}}\sin(\lambda_2 T)\right|\ \cdot\\
    &\cdot\left(\left(\frac{\omega^2+\lambda_1^2}{\omega \lambda_{1}}\sin(\lambda_{1}T)+\frac{\omega^2+\lambda_{2}^2}{\omega \lambda_{2}}\sin(\lambda_{2}T)\right)^2+4\big(\cos(\lambda_1 T)-\cos(\lambda_2 T)\big)^2\right)^{\frac{1}{2}},
    \label{corollary_quantity_definition_final_example}
\end{aligned}
\end{equation}
with $\lambda_1 = \sqrt{\omega(\omega+\theta)}$ and $\lambda_2 = \sqrt{\omega(\omega-\theta)}$. If $\Gamma_{\pm}(T)\neq 0$ and $\alpha\beta (2\hbar)^2\cdot \Gamma_{+}(T) >1$, then $u\equiv 0$.
\end{corollary}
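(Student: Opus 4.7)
The plan is to instantiate Theorem \ref{theorem_Hardy_UP_quadratic_Hamiltonian} for the specific block $\mathcal{B}(T)$ given by \eqref{upper_right_block_matrix_explicit_example}. The two hypotheses of that theorem are $\det\mathcal{B}(T)\neq 0$ and $(2\hbar)^{2}\alpha\beta\|\mathcal{B}(T)\|_{\textnormal{op}}^{2}>1$, so the task is purely algebraic: identify $\Gamma_{+}(T)$ with $\|\mathcal{B}(T)\|_{\textnormal{op}}^{2}$ and $\Gamma_{\pm}(T)\neq 0$ with $\det\mathcal{B}(T)\neq 0$.

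Because $\mathcal{B}(T)$ has the special structure $\begin{psmallmatrix}a & b\\ -b & c\end{psmallmatrix}$ (the off-diagonal entries of \eqref{upper_right_block_matrix_explicit_example} differ only by sign), a direct matrix product gives
\begin{align*}
    \mathcal{B}(T)^{T}\mathcal{B}(T)=\begin{pmatrix} a^{2}+b^{2} & (a-c)b\\ (a-c)b & c^{2}+b^{2}\end{pmatrix},
\end{align*}
with trace $a^{2}+c^{2}+2b^{2}$ and determinant $(ac+b^{2})^{2}=(\det\mathcal{B}(T))^{2}$. Factoring the discriminant of the characteristic polynomial as $(a-c)^{2}\bigl((a+c)^{2}+4b^{2}\bigr)$, the two squared singular values are
\begin{align*}
    \lambda_{\pm}=\tfrac{1}{2}\Bigl(a^{2}+c^{2}+2b^{2}\pm|a-c|\sqrt{(a+c)^{2}+4b^{2}}\Bigr),
\end{align*}
and in particular $\|\mathcal{B}(T)\|_{\textnormal{op}}^{2}=\lambda_{+}$.

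The remaining step is a substitution: reading off $a,b,c$ from \eqref{upper_right_block_matrix_explicit_example} and using the identities $\tfrac{\lambda_{j}}{\omega}\mp\tfrac{\omega}{\lambda_{j}}=\mp\tfrac{\omega^{2}\mp\lambda_{j}^{2}}{\omega\lambda_{j}}$, the scaled quantity $2(2m\omega)^{2}\lambda_{\pm}$ reproduces the right-hand side of \eqref{corollary_quantity_definition_final_example}, so that $(2m\omega)^{2}\lambda_{\pm}=\Gamma_{\pm}(T)$. Since $\mathcal{B}(T)^{T}\mathcal{B}(T)$ is positive semidefinite, the non-negative quantities $\Gamma_{\pm}(T)$ both vanish precisely when $\det\mathcal{B}(T)=0$; hence $\Gamma_{\pm}(T)\neq 0$ is equivalent to $\det\mathcal{B}(T)\neq 0$, and $\exp(TX)$ is free symplectic. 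The hypothesis $(2\hbar)^{2}\alpha\beta\,\Gamma_{+}(T)>1$ then becomes exactly the analytic condition of Theorem \ref{theorem_Hardy_UP_quadratic_Hamiltonian}, forcing $u\equiv 0$. The only nontrivial step is the bookkeeping in matching \eqref{corollary_quantity_definition_final_example} term by term; everything else follows automatically from the general theorem and the closed-form exponential provided by Lemma \ref{lemma_Exponential_symplectic_Lie_algebra_positive_definite_matrix}.
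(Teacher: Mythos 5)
Your proposal is correct and follows essentially the same route as the paper: compute $\mathcal{B}(T)^{T}\mathcal{B}(T)$ for a matrix of the form $\begin{psmallmatrix}a & b\\ -b & c\end{psmallmatrix}$, read off its eigenvalues $\tfrac{1}{2}\bigl(a^{2}+2b^{2}+c^{2}\pm|a-c|\sqrt{(a+c)^{2}+4b^{2}}\bigr)$, identify them with $\Gamma_{\pm}(T)$ up to the factor $(2m\omega)^{-2}$, and feed $\Gamma_{+}(T)=\|\mathcal{B}(T)\|_{\textnormal{op}}^{2}$ into Theorem \ref{theorem_Hardy_UP_quadratic_Hamiltonian}. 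One small phrasing slip: since $\Gamma_{+}(T)\Gamma_{-}(T)$ is proportional to $(\det\mathcal{B}(T))^{2}$, vanishing of the determinant forces only $\Gamma_{-}(T)=0$ rather than both quantities, but the equivalence you actually use --- both $\Gamma_{\pm}(T)\neq 0$ iff $\det\mathcal{B}(T)\neq 0$ iff $\exp(TX)$ is free symplectic --- is exactly right, and making it explicit is a point the paper leaves to the reader.
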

\begin{proof}
It remains to show that the eigenvalues of $\mathcal{B}^{T}\mathcal{B}(T)$ coincide with $\Gamma_{-}(T)\leq \Gamma_{+}(T)$ so that $\Gamma_{+}(T) = \| \mathcal{B}(T)\|_{\text{op}}^{2}$. From \eqref{upper_right_block_matrix_explicit_example}, we see that the matrix $\mathcal{B}(T)$ is of the form $\mathcal{B}(T) = \begin{psmallmatrix}a & b\\ -b & c\end{psmallmatrix}$, and in turn
\begin{align*}
    \mathcal{B}^{T}\mathcal{B}(T) = \begin{pmatrix}a^2+b^2 & (a-c)b\\ (a-c)b & b^2+c^2\end{pmatrix}.
\end{align*}
The two eigenvalues $\delta_{\pm}$ of $\mathcal{B}^{T}\mathcal{B}(T)$ are then given by
\begin{align*}
    \delta_{\pm} = \frac{1}{2}\Big[\left(a^2+2b^2+c^2\right)\pm|a-c|\sqrt{(a+c)^2+4b^2}\Big],
\end{align*}
and the result follows once we replace $a,b$ and $c$ with the expressions in \eqref{upper_right_block_matrix_explicit_example}.
\end{proof}

\begin{remark}
Although the quantity $\Gamma_{\pm}(T)$ in \eqref{corollary_quantity_definition_final_example} looks rather daunting, it is easy to see that the expression reduces to the harmonic oscillator case of Corollary \ref{corollary_Harmonic_oscillator_Hardy_UP} when $\theta\to0$. 
\end{remark}

\end{example}

\section*{Acknowledgements}
The research of the author was supported by Grant 275113 of the Research Council of
Norway. The author would like to extend thanks to prof. Eugenia Malinnikova and prof. Franz Luef for the many insightful discussions and feedback on early drafts of manuscript.

\addcontentsline{toc}{section}{References}
\bibliographystyle{abbrv}
\bibliography{Notes_on_Hardys_UP_for_Wigner_distribution_and_Schrodinger_evolutions_HelgeKnutsen}

\end{document}